\documentclass[11pt]{amsart}
\usepackage{mathrsfs,xcolor}

\title[On the identities of 2-dimensional tr-prime algebras]{On the trace identities of 2-dimensional tr-prime algebras over a finite field}
\author[L.~Centrone et al]{Lucio Centrone}
\address{Dipartimento di Matematica, Universit\`a degli Studi di Bari Aldo Moro, Via Edoardo Orabona, 4, 70125 Bari, Italy}
\email{lucio.centrone@uniba.it}
\thanks{L. Centrone was partially supported by PNRR-MUR PE0000023-NQSTI}

\author[]{Rosiele Trindade Barbosa}
\address{Department of Mathematics, Instituto de Matem\'atica, Estat\'istica e Ci\^encia da Computa\c c\~ao, Universidade de S\~ao Paulo, SP, Brazil}
\email{rosiele@ime.usp.br}
\thanks{R.~Barbosa was financed in part by the Coordena\c c\~ao de Aperfei\c coamento de Pessoal de N\'ivel Superior - Brasil (CAPES) - Finance Code 001.}

\author[]{Felipe Yukihide Yasumura}
\address{Department of Mathematics, Instituto de Matem\'atica, Estat\'istica e Computa\c c\~ao Cient\'ifica, Universidade Estadual de Campinas, SP, Brazil}
\email{yukihide@unicamp.br}
\thanks{F.~Yasumura was supported by Fapesp grant no.~2024/14914-9 and 2025/16296-3.}

\newtheorem{thm}{Theorem}
\newtheorem{lemma}[thm]{Lemma}
\newtheorem{proposition}[thm]{Proposition}

\theoremstyle{definition}
\newtheorem{definition}[thm]{Definition}
\theoremstyle{remark}

\newtheorem{remark}[thm]{Remark}

\renewcommand{\labelenumi}{(\roman{enumi})}

\begin{document}
\begin{abstract}
We study associative algebras endowed with a trace map. We classify the two-dimensional trace-prime algebras over an arbitrary field. In addition, when the base field is finite, for each such algebra, we provide a finite basis for the T-ideal of trace polynomial identities.
\end{abstract}
\maketitle

\section{Introduction}

The Specht problem asks whether the set of all polynomial identities satisfied by an algebra admits a finite basis. It is well known that this problem has a positive solution for associative algebras over fields of characteristic zero \cite{Kemer} (see also \cite{AKBK}). Positive solutions have also been obtained for several important varieties of nonassociative algebras \cite{ilt_alt,vais_zelm}, as well as for varieties generated by finite algebras (including finite groups); see \cite{BO,Kruse,Lvov,Lvov2,Medvedev1,Medvedev2,OaPo,Polin}.

Despite these general existence results, determining an explicit finite basis for the polynomial identities of a given algebra is, in general, a difficult task. A significant part of the development of PI-theory has therefore been devoted to finding explicit finite bases for important classes of algebras.

For instance, over a field of characteristic zero, the polynomial identities of finite-dimensional central simple associative algebras are known only when the dimension is at most $4$. This was first established by Razmyslov \cite{Raz73} and later refined by Drensky \cite{Drensky81}. Over an infinite field of positive odd characteristic, analogous results were obtained by Koshlukov \cite{Ko01}. On the other hand, when the base field is finite, a finite basis for the polynomial identities of the matrix algebra $M_n(\mathbb{F})$ is known only for $n\leq 4$ (see \cite{Gen81,GenSid82,MalKuz78}).

In the context of associative polynomial identities with trace, Razmyslov \cite{Raz74} and Procesi \cite{Pro}, independently, described the polynomial identities of the algebra of $n\times n$ matrices endowed with the trace map. Moreover, the trace of the matrix algebra is closely related to the existence of the so-called central polynomials. In general, the cocharacter sequence of trace algebras was studied by Berele and Regev \cite{BR}. In particular, the authors applied the results to study the algebras of diagonal matrices. In addition, in \cite{AB}, Berele determined the $T$-ideal of trace identities of the algebra of diagonal matrices and upper triangular matrices. The T-ideal of polynomial identities of the algebra of diagonal matrices endowed with other trace maps was investigated in \cite{IKM21,IKM22} (see \cite{IKM} as well).

In this paper, in order to obtain a finite partial analogue of these results, we study the polynomial identities of 2-dimensional tr-prime associative algebras. First, we classify these algebras (Proposition~\ref{2_dim}). Then, our main result is an explicit finite basis for the trace polynomial identities of each  such algebra when the base field is finite (Theorems~\ref{thm_alphabeta}, \ref{thm_lambda}, \ref{thmA}, \ref{thmB} and \ref{thmC}). Finally, in Section~\ref{sec}, we discuss and compare our approach with the classical approach to studying polynomial identities of finite algebras.

\section{Preliminaries}
In this section, unless otherwise stated, we assume that $\mathbb{F}$ is an arbitrary field.

\subsection{Trace algebras}
Let $\mathcal{A}$ be an associative algebra over a field $\mathbb{F}$. A \emph{trace} on $\mathcal{A}$ is an $\mathbb{F}$-linear map $\mathrm{tr}:\mathcal{A}\to\mathcal{A}$ such that, for all $a$, $b\in\mathcal{A}$:
\begin{enumerate}
\item $\mathrm{tr}(a)\in Z(\mathcal{A})$,
\item $\mathrm{tr}(ab)=\mathrm{tr}(ba)$,
\item $\mathrm{tr}(\mathrm{tr}(a)b)=\mathrm{tr}(a)\mathrm{tr}(b)$.
\end{enumerate}
A \emph{trace algebra}, or simply a \emph{tr-algebra}, is an algebra endowed with a trace map. Often, a trace map is defined to be a linear map $\mathcal{A}\to\mathbb{F}$, which is a particular case of our definition when $\mathcal{A}$ is unital.

A homomorphism of algebras with trace is an algebra homomorphism $\psi:(\mathcal{A},t)\to(\mathcal{A}',t')$ such that $t'(\psi(a))=\psi(t(a))$, for all $a\in\mathcal{A}$. If $\psi$ is bijective, then we say that $(\mathcal{A},t)$ and $(\mathcal{A}',t')$ are \emph{isomorphic}, denoted $(\mathcal{A},t)\cong(\mathcal{A}',t')$.

A \emph{tr-ideal} is an ideal $I\subseteq\mathcal{A}$ such that $t(a)\in I$, for all $a\in I$. We say that $\mathcal{A}$ is:
\begin{enumerate}
\item \emph{tr-simple} if $\mathcal{A}^2\ne0$ and the only tr-ideals of $\mathcal{A}$ are $0$ and $\mathcal{A}$;
\item \emph{tr-prime} if for any pair of nonzero tr-ideals $I$, $J\subseteq\mathcal{A}$, we have $IJ\ne0$;
\item \emph{tr-semiprime} if for any nonzero tr-ideal $I\subseteq\mathcal{A}$, we have $I^2\ne0$.
\end{enumerate}

\subsection{$\Omega$-algebras}
We let $\Omega=\bigcup_{n\ge0}\Omega_n$. An $\Omega$-algebra is an $\mathbb{F}$-vector space $\mathcal{A}$ such that for any $\omega\in\Omega_n$, we have an $n$-linear operation $\omega:\mathcal{A}\times\cdots\times\mathcal{A}\to\mathcal{A}$. The $0$-ary operations are a choice of a distinguished element in $\mathcal{A}$.

Given a set $X$, we build $M(X)$ inductively via:
\begin{itemize}
\item $X\subseteq M(X)$, and
\item for any $\omega\in\Omega_n$ ($n\ge0$), and $w_1$, \ldots, $w_n\in M(X)$, then one has $\omega(w_1,\ldots,w_n)\in M(X)$.
\end{itemize}
Then, the $\mathbb{F}$-vector space with basis $M(X)$ has a natural structure of an $\Omega$-algebra. It is the absolute free $\Omega$-algebra over $\mathbb{F}$.

\subsection{Free trace algebras}
We consider the absolute free $\Omega$-algebra, where $\Omega=\Omega_0\cup\Omega_1\cup\Omega_2$, and $\Omega_0=\{1\}$, $\Omega_1=\{t\}$, and $\Omega_2=\{\cdot\}$. The variety of unital associative algebras with trace is the variety of $\Omega$-algebras defined by the following polynomial identities:
\begin{align*}
&(x\cdot y)\cdot z-x\cdot(y\cdot z),\quad 1\cdot x-x,\quad x\cdot 1-x,\\%
&t(x)\cdot y-y\cdot t(x),\quad t(x\cdot y)-t(y\cdot x),\quad t(t(x)\cdot y)-t(x)\cdot t(y).
\end{align*}
Its free algebra, freely generated by a set $X$, is denoted by $\mathbb{F}\langle X,t\rangle$. As customary, the operation $\cdot$ is represented via juxtaposition of monomials.

The set of all polynomial identities of a trace algebra $(\mathcal{A},t)$ in $\mathbb{F}\langle X,t\rangle$ is denoted by $\mathrm{Id}(\mathcal{A},t)$. Given a subset $S\subseteq\mathbb{F}\langle X,t\rangle$, $\langle S\rangle$ denotes the smallest T-ideal containing $S$.

\subsection{Central closure}
The following construction can be carried out for prime $\Omega$-algebras (see \cite[\S3]{Razmyslov}). Let $(\mathcal{A},t)$ be a tr-prime algebra. Let $\mathrm{M}(\mathcal{A})$ be the subalgebra of $\mathrm{End}_\mathbb{F}(\mathcal{A})$ generated by all left and right multiplications and the linear map $t$. Then, there is a pair of algebras, namely, the unital associative and commutative \emph{centroid} $C(\mathcal{A})$ and the \emph{central closure} $Q(\mathcal{A})$, characterized by the following properties (see \cite[Proposition 3.2]{Razmyslov}):
\begin{enumerate}
\item $Q(\mathcal{A})$ is a $C(\mathcal{A})$-algebra containing $\mathcal{A}$, and $Q(\mathcal{A})=C(\mathcal{A})\mathcal{A}$;
\item Every nonzero $\mathrm{M}(\mathcal{A})$-submodule of $Q(\mathcal{A})$ has nonzero intersection with $\mathcal{A}$;
\item For every homomorphism of $\mathrm{M}(\mathcal{A})$-modules $\chi:I\to\mathcal{A}$, where $I\subseteq\mathcal{A}$ is an ideal, there exists a unique $c\in C(\mathcal{A})$ such that $\chi(a)=ca$, for all $a\in I$.
\end{enumerate}
In addition, $C(\mathcal{A})$ is a field and $Q(\mathcal{A})$ is tr-prime.

Connected to the construction of the pair, we have:
\begin{lemma}[particular case of {\cite[Theorem 4.1]{Razmyslov}}]\label{lemRaz3}
Let $\mathcal{A}$ be a tr-prime algebra, and let $\mathcal{V}\subseteq\mathcal{A}$ be a subspace. Then, $\dim_{C(\mathcal{A})}C(\mathcal{A})\mathcal{V}<m$ if and only if for every polynomial $f=f(x_1,\ldots,x_m,y_1,\ldots,y_r)$ that is multilinear and alternating in $\{x_1,\ldots,x_m\}$, then $f(v_1,\ldots,v_m,a_1,\ldots,a_r)=0$, for all $v_1,\ldots,v_m\in\mathcal{V}$ and for all $a_1,\ldots,a_r\in\mathcal{A}$.
\end{lemma}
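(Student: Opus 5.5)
We prove the two implications separately. Throughout we work inside $Q(\mathcal{A})$ viewed as a $C=C(\mathcal{A})$-algebra: every multilinear $\Omega$-polynomial, including the trace, is $C$-multilinear on $Q(\mathcal{A})=C\mathcal{A}$. For the trace this uses that $C$ commutes with $\mathrm{M}(\mathcal{A})$, since the elements of the centroid are $\mathrm{M}(\mathcal{A})$-module maps by property (iii). We take this extension of the operations to $Q(\mathcal{A})$ as part of the construction cited from \cite[\S3]{Razmyslov}.

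For the direction ($\Rightarrow$), assume $\dim_C C\mathcal{V}<m$, and take $f$ multilinear and alternating in $x_1,\ldots,x_m$. Fix $v_1,\ldots,v_m\in\mathcal{V}$ and $a_1,\ldots,a_r\in\mathcal{A}$. Choose a $C$-basis $e_1,\ldots,e_k$ of $C\mathcal{V}$ with $k<m$, and write $v_i=\sum_j c_{ij}e_j$ with $c_{ij}\in C$. Expanding $f(v_1,\ldots,v_m,\bar a)$ by $C$-multilinearity gives a $C$-combination of terms $f(e_{j_1},\ldots,e_{j_m},\bar a)$. Since $k<m$, each such term has a repeated argument. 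Alternation means the value vanishes whenever two alternating arguments coincide, so every term is zero, and hence $f(v_1,\ldots,v_m,\bar a)=0$.

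For the direction ($\Leftarrow$), which is the substantive one, we argue by contraposition. Suppose $\dim_C C\mathcal{V}\ge m$, and pick $v_1,\ldots,v_m\in\mathcal{V}$ that are $C$-linearly independent. We must produce an $f$, multilinear and alternating in the $x$'s, that does not vanish on them. The plan follows Razmyslov's argument and builds $f$ by induction on $m$.

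\textbf{Step 1 (case $m=1$).} Here $v_1\neq0$, and $f=x_1$ suffices.

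\textbf{Step 2 (inductive step).} By induction there is a polynomial $g(x_1,\ldots,x_{m-1},\bar y)$, alternating in its $x$'s, with $g(v_1,\ldots,v_{m-1},\bar a)\ne0$. Form the candidate
\[
f=\sum_{i=1}^m(-1)^{m-i}\,h\bigl(x_i,\;g(x_1,\ldots,\widehat{x_i},\ldots,x_m,\bar y)\bigr),
\]
where $h(u,w)$ is built from multiplication operators. Concretely, $h$ is an element $\mu\in\mathrm{M}(\mathcal{A})$ applied to either $u$ or $w$, with extra variables $\bar z$ appearing as operator entries. This $f$ is alternating in $x_1,\ldots,x_m$.

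\textbf{Step 3 (nonvanishing).} Suppose every such $f$ vanishes identically on $v_1,\ldots,v_m$. Set $I=\mathrm{M}(\mathcal{A})\,g(v_1,\ldots,v_{m-1},\bar a)$, which is a nonzero tr-ideal. The vanishing lets us define, for each $i$, a map
\[
\chi_i:\mu\, g(v_1,\ldots,\widehat{v_i},\ldots,v_m,\bar a)\mapsto \mu\, v_i .
\]
More precisely, the relations forced by the vanishing of $f$ show that the assignments on the different $g$-values are compatible. This yields an $\mathrm{M}(\mathcal{A})$-module map from $I$ (or from a suitable sum of such ideals) into $\mathcal{A}$. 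Property (iii) then represents each $\chi_i$ by a scalar $c_i\in C$, and unwinding the definitions gives a nontrivial relation $\sum_i \pm c_i v_i=0$ over $C$. This contradicts the linear independence of the $v_i$. Tr-primeness is what makes this work: it guarantees that the relevant annihilators are zero, so that the $\chi_i$ are well defined.

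The main obstacle is Step 3. It requires showing that the vanishing of all the $f$'s really makes $\chi_i$ well defined, i.e. that $\mu\,g(\ldots)=0$ forces $\mu\,v_i=0$, and getting the signs and indices right so that the resulting $C$-relation is nontrivial. Since the statement is recorded as a particular case of \cite[Theorem 4.1]{Razmyslov}, in the paper we would simply invoke that theorem applied to the $\Omega$-algebra $(\mathcal{A},t)$ with $\Omega=\{1,t,\cdot\}$. The one thing to check is that tr-primeness coincides with primeness in Razmyslov's $\Omega$-algebra sense, which holds because tr-ideals are exactly the $\Omega$-ideals.
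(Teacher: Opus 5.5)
Your proposal ends where the paper does. The paper gives no argument beyond invoking Razmyslov's Theorem~4.1 for the $\Omega$-algebra $(\mathcal{A},t)$ with $\Omega=\{1,t,\cdot\}$, so your observation that tr-ideals are exactly the $\Omega$-ideals (hence tr-primeness is Razmyslov's primeness) is the only check required, and your $C$-multilinearity argument for ($\Rightarrow$) is fine.

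Do not, however, offer Steps~2--3 as a proof of ($\Leftarrow$). Your $\chi_i$ is defined on $\mathrm{M}(\mathcal{A})\,g(v_1,\ldots,\widehat{v_i},\ldots,v_m,\bar a)$ rather than on $I$, and it sends $g$-values to the $v_i$ themselves. Consequently neither its well-definedness (which tr-primeness does not provide) nor the relation $\sum_i\pm c_iv_i=0$ follows. A standard way to close this direction is the $\Omega$-analogue of the Amitsur--Martindale lemma: for $C$-independent $v_1,\ldots,v_m$, some $\mu\in\mathrm{M}(\mathcal{A})$ kills $v_1,\ldots,v_{m-1}$ but not $v_m$, proved by induction via property (iii). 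One then alternates $\Phi(\mu_1x_1,\ldots,\mu_mx_m)$ over $x_1,\ldots,x_m$, where $\mu_i$ kills every $v_j$ with $j\ne i$ but not $v_i$, and $\Phi$ is a multilinear polynomial with $\Phi(\mu_1v_1,\ldots,\mu_mv_m,\bar b)\neq0$, which exists by tr-primeness.
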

A multilinear polynomial that is alternating in a set of $n$ variables is called a \emph{Capelli polynomial of order $n$}.

\subsection{Further results on finite prime algebras}
Embeddings of finite prime algebras are determined by embeddings of the respective T-ideal of polynomial identities:
\begin{lemma}[{\cite[part of Theorem~8]{BDDF}}]\label{section}
Let $\mathcal{A}\in\mathrm{var}(\mathcal{A}')$, where both $\mathcal{A}$ and $\mathcal{A}'$ are finite tr-algebras and $\mathcal{A}$ is tr-prime. Then $\mathcal{A}$ is a section of $\mathcal{A}'$, that is, there exists a subalgebra $\mathcal{S}\subseteq\mathcal{A}'$ and a surjective algebra homomorphism $\mathcal{S}\to\mathcal{A}$.
\end{lemma}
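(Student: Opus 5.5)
The plan follows the classical argument for finite algebras (Kruse--L'vov style, adapted by \cite{BDDF} to $\Omega$-algebras), run inside the variety of trace algebras. Let $\mathcal{A}\in\mathrm{var}(\mathcal{A}')$ with $\mathcal{A},\mathcal{A}'$ finite and $\mathcal{A}$ tr-prime. We may assume $\mathcal{A}\neq0$. The first step is to realize $\mathcal{A}$ as a quotient of a finitely generated relatively free algebra. Let $n=|\mathcal{A}|$ and take $X=\{x_a : a\in\mathcal{A}\}$. Put $\mathcal{F}=\mathbb{F}\langle X,t\rangle/\mathrm{Id}(\mathcal{A}',t)$. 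Since $\mathcal{A}$ satisfies all identities of $\mathcal{A}'$, the map $x_a\mapsto a$ induces a surjective tr-homomorphism $\pi:\mathcal{F}\to\mathcal{A}$. Moreover, $\mathcal{F}$ embeds into the finite direct power $\mathcal{A}'^{\,\mathcal{A}'^X}$ via evaluations, so $\mathcal{F}$ is itself finite.

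The second step is to make this embedding concrete. For each map $\varphi:X\to\mathcal{A}'$ we have an evaluation homomorphism $e_\varphi:\mathcal{F}\to\mathcal{A}'$, and $\bigcap_\varphi\ker e_\varphi=0$. Each $\ker e_\varphi$ is a tr-ideal. Set $K=\ker\pi$, a tr-ideal with $\mathcal{F}/K\cong\mathcal{A}$. The goal is to find one $\varphi$ such that $\ker e_\varphi\subseteq K$. Once we have it, $\mathcal{S}=e_\varphi(\mathcal{F})$ is a subalgebra of $\mathcal{A}'$ (closed under $t$), and $\pi$ factors through $\mathcal{S}\cong\mathcal{F}/\ker e_\varphi\to\mathcal{F}/K\cong\mathcal{A}$. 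This gives the required surjection from $\mathcal{S}$ onto $\mathcal{A}$.

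The third step, and the main obstacle, is producing such a $\varphi$ using tr-primeness. The images $I_\varphi=(\ker e_\varphi+K)/K$ are tr-ideals of $\mathcal{A}$, and there are finitely many $\varphi$. Suppose every $I_\varphi$ were nonzero. Their product $I_{\varphi_1}\cdots I_{\varphi_N}$, taken in any order, would then be nonzero: iterating the tr-prime condition requires each partial product to be again a nonzero tr-ideal. A product of tr-ideals is an ideal, and it is a tr-ideal when tr-ideals absorb $t$. This is where care is needed. I would instead replace products by the tr-ideal generated by the product, which is still contained in $\bigcap_\varphi I_\varphi$ because each $I_\varphi$ is a tr-ideal. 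On the other hand, the product $\prod_\varphi \ker e_\varphi$ lies in $\bigcap_\varphi\ker e_\varphi=0$, and the tr-ideal it generates lies there too. The product of the images $I_\varphi$ is the image of the product of the $\ker e_\varphi$ modulo $K$, hence zero. This contradicts tr-primeness, so some $I_\varphi=0$, that is, $\ker e_\varphi\subseteq K$.

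The delicate point is the second step of iterating primeness. Concretely, we must show that if $I,J$ are nonzero tr-ideals, then the tr-ideal generated by $IJ$ is nonzero and is contained in $I\cap J$. The nonvanishing follows from $IJ\neq0$, and the containment holds because both $I$ and $J$ are closed under $t$ and under multiplication by $\mathcal{A}$. With this, induction on $N$ finishes the argument. Unitality is not actually needed here.
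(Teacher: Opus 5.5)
The paper does not prove this lemma; it quotes it without proof from Theorem~8 of the cited work, so there is no internal argument to compare against. Your proof is correct and follows the standard route. You write $\mathcal{A}$ as a quotient $\pi\colon\mathcal{F}\to\mathcal{A}$ of the finitely generated relatively free algebra of $\mathrm{var}(\mathcal{A}')$. You embed $\mathcal{F}$ into a finite power of $\mathcal{A}'$ through the finitely many evaluations $e_\varphi$. Tr-primeness then lets $\pi$ factor through a single $e_\varphi$. What is really used is that there are only finitely many $\varphi$, not that $\mathcal{F}$ is finite.

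The one point to tighten is how the contradiction is reached in the third step. Your containment of the iterated tr-ideal in $\bigcap_\varphi I_\varphi$ does not by itself give anything. A priori one only knows $\pi\bigl(\bigcap_\varphi\ker e_\varphi\bigr)\subseteq\bigcap_\varphi\pi(\ker e_\varphi)$, not equality. Also, the object that must be lifted is not the tr-ideal generated by $\prod_\varphi\ker e_\varphi$, but the nested one.

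Concretely, put $\tilde J_1=\ker e_{\varphi_1}$, and let $\tilde J_{k+1}$ be the tr-ideal of $\mathcal{F}$ generated by $\tilde J_k\ker e_{\varphi_{k+1}}$. Then $\tilde J_N\subseteq\bigcap_\varphi\ker e_\varphi=0$.

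Since $\pi$ is a surjective tr-homomorphism, $\pi(LM)=\pi(L)\pi(M)$. Moreover, $\pi$ maps the tr-ideal generated by $L$ onto the tr-ideal generated by $\pi(L)$: the image is a tr-ideal containing $\pi(L)$, and the preimage of the latter is a tr-ideal containing $L$. Hence $J_k:=\pi(\tilde J_k)$ satisfies $J_1=I_{\varphi_1}$, and $J_{k+1}$ is the tr-ideal generated by $J_kI_{\varphi_{k+1}}$. Tr-primeness and induction give $J_N\neq0$, contradicting $J_N=\pi(\tilde J_N)=0$.

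This is what your remark ``the product of the images is the image of the product'' is reaching for. It just has to be applied to the nested tr-ideals rather than to bare products, which need not be closed under $t$ and so cannot be fed back into the tr-prime condition.
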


As a consequence, we obtain:
\begin{lemma}[{\cite[Corollary~9]{BDDF}}]\label{iso_fin}
Let $\mathcal{A}$ and $\mathcal{A}'$ be finite tr-prime algebras satisfying the same set of polynomial identities. Then $\mathcal{A}\cong\mathcal{A}'$.
\end{lemma}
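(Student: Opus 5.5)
The plan is to apply Lemma~\ref{section} in both directions and then count elements. Since $\mathrm{Id}(\mathcal{A},t)=\mathrm{Id}(\mathcal{A}',t')$, we have $\mathcal{A}\in\mathrm{var}(\mathcal{A}')$ and $\mathcal{A}'\in\mathrm{var}(\mathcal{A})$. Both algebras are finite and tr-prime, so Lemma~\ref{section} applies each way. It gives a subalgebra $\mathcal{S}\subseteq\mathcal{A}'$ with a surjective homomorphism $\psi:\mathcal{S}\to\mathcal{A}$. Symmetrically, it gives a subalgebra $\mathcal{S}'\subseteq\mathcal{A}$ with a surjection $\mathcal{S}'\to\mathcal{A}'$.

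Next, count. From the first section we get $|\mathcal{A}|\le|\mathcal{S}|\le|\mathcal{A}'|$, and from the second $|\mathcal{A}'|\le|\mathcal{S}'|\le|\mathcal{A}|$. Hence all these cardinalities are equal and finite. It follows that $\mathcal{S}=\mathcal{A}'$, since a subset of a finite set with the same cardinality is the whole set. Then $\psi:\mathcal{A}'\to\mathcal{A}$ is a surjection between finite sets of equal size, so it is a bijection. Therefore $\psi$ is an isomorphism.

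The one point needing care, and the only real obstacle, is that $\psi$ must be an isomorphism of tr-algebras and not merely of algebras. Here I would read Lemma~\ref{section} in the category of tr-algebras, which is how it is used in \cite{BDDF} for $\Omega$-algebras. In that reading, $\mathcal{S}$ is a subalgebra closed under the trace, and the surjection commutes with the traces. This is natural, because the varieties in question are varieties of $\Omega$-algebras with $t$ among the operations. With that reading, $t(\psi(a))=\psi(t'(a))$ for all $a\in\mathcal{A}'$. Since $\psi$ is bijective, this is exactly the definition of $(\mathcal{A},t)\cong(\mathcal{A}',t')$ given above.
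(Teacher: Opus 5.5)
Your proposal is correct and matches the paper's intent: the paper gives no proof of its own, citing the lemma from \cite{BDDF} as a direct consequence of Lemma~\ref{section}. Applying that lemma in both directions and comparing cardinalities is the standard derivation. You are also right that ``subalgebra'' and ``homomorphism'' there must be read in the $\Omega$-algebra (trace-compatible) sense, which is how the paper uses them, for instance in Proposition~\ref{prop}; this is what makes $\psi$ an isomorphism of tr-algebras.
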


\section{Prime tr-algebras}

In this section, $\mathbb{F}$ denotes an arbitrary field. We shall present some families of tr-prime algebras, and we classify the $2$-dimensional tr-prime algebras.

\subsection{Diagonal matrices}

Let $\alpha_1,\ldots,\alpha_n\in\mathbb{F}$. Then, $(D_n,t_{(\alpha_1,\ldots,\alpha_n)})$ denotes the algebra $\mathbb{F}^n$ endowed with the trace
\[
(a_1,\ldots,a_n)\mapsto(\alpha_1a_1+\cdots+\alpha_na_n)(1,\ldots,1).
\]

The notation $D_n(\mathbb{F})$ will be used when we need to emphasize that the underlying algebra structure is $\mathbb{F}^n$. It is clear that any trace map of the kind $D_n\to\mathbb{F}$ is equal to some $t_{(\alpha_1,\ldots,\alpha_n)}$. In addition, it is elementary to prove that $(D_n,t_{(\alpha_1,\ldots,\alpha_n)})\cong(D_m,t_{(\beta_1,\ldots,\beta_m)})$ if and only if $n=m$ and there exists $\sigma\in\mathcal{S}_n$ such that $\alpha_i=\beta_{\sigma(i)}$, for each $i\in\{1,2,\ldots,n\}$.

\begin{definition}
A trace $t$ on an algebra $\mathcal{A}$ is called \emph{degenerate} if there exists a non-zero element $a \in \mathcal{A}$ such that, for all $b\in \mathcal{A}$, one has $t(ab)=0$. Otherwise, we say that $t$ is \emph{nondegenerate}.
\end{definition}

Now, we have a complement to \cite[Lemma 6]{IKM22}:
\begin{lemma}\label{unnamed}
Assume that $n\ge2$. The following conditions on $(D_n,t_{(\alpha_1,\ldots,\alpha_n)})$ are equivalent:
\begin{enumerate}
\item $t_{(\alpha_1,\ldots,\alpha_n)}$ is nondegenerate,
\item $\alpha_i\ne0$, for all $i\in\{1,2,\ldots,n\}$,
\item $(D_n,t_{(\alpha_1,\ldots,\alpha_n)})$ is tr-simple.
\end{enumerate}
\end{lemma}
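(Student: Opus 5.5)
I would prove the cycle of implications (i)$\Rightarrow$(ii)$\Rightarrow$(iii)$\Rightarrow$(i), using throughout the primitive idempotents $e_1,\ldots,e_n$ of $D_n=\mathbb{F}^n$. For $a=(a_1,\ldots,a_n)$ we have $t(a)=(\sum_i\alpha_ia_i)\mathbf{1}$; in particular $t(e_i)=\alpha_i\mathbf{1}$.

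\emph{(i)$\Rightarrow$(ii).} I argue by contraposition. If $\alpha_i=0$, then for every $b$ we have $e_ib=b_ie_i$, so $t(e_ib)=\alpha_ib_i\mathbf{1}=0$. Hence $e_i\neq0$ witnesses degeneracy.

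\emph{(ii)$\Rightarrow$(iii).} Clearly $D_n^2=D_n\neq0$. Let $I$ be a nonzero tr-ideal, and pick $0\neq a\in I$ with $a_i\neq0$. Then $e_i=a_i^{-1}e_ia\in I$, because $I$ is an ideal. Since $I$ is closed under the trace, $t(e_i)=\alpha_i\mathbf{1}\in I$, and $\alpha_i\neq0$ gives $\mathbf{1}\in I$, hence $I=D_n$. This is exactly where the trace condition is needed: $D_n$ is not simple as an ordinary algebra for $n\ge2$, and the trace forces the unit into $I$.

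\emph{(iii)$\Rightarrow$(i).} Again I argue by contraposition. Suppose $t$ is degenerate, with $a\neq0$ and $t(ab)=0$ for all $b$. Taking $b=e_j$ gives $\alpha_ja_j=0$ for all $j$. Since $a\neq0$, some $a_i\neq0$, so $\alpha_i=0$. Now set $I=\mathbb{F}e_i$. It is an ideal, because $e_ib=b_ie_i$. It is a tr-ideal, because $t(e_i)=0$. Since $n\ge2$, we have $0\neq I\neq D_n$, so $D_n$ is not tr-simple.

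The only delicate point is the role of $n\ge2$: for $n=1$ with $\alpha_1=0$, the algebra $\mathbb{F}$ is still tr-simple, so the hypothesis is genuinely needed in the last step. Everything else is a direct computation with idempotents, and I expect no real obstacle.
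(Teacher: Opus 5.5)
Your proof is correct and, for (ii)$\iff$(iii), it is essentially the paper's argument. A nonzero ideal contains some $e_i$, whose trace $\alpha_i\mathbf{1}$ forces $I=D_n$ when $\alpha_i\neq0$, while $\mathbb{F}e_i$ is a proper tr-ideal when $\alpha_i=0$ and $n\ge2$. The only difference is that you verify (i)$\iff$(ii) directly, by testing $t(ab)$ against $b=e_j$, whereas the paper cites \cite[Lemma 6]{IKM22}; your version is therefore self-contained.
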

\begin{proof}
(i)$\iff$(ii): Proved in \cite[Lemma 6]{IKM22}.

(ii)$\iff$(iii): The minimal ordinary ideals of $D_n$ are of the form $\mathbb{F}e_i$, where $e_i\in D_n$ has its $i$-th entry $1$ and $0$ elsewhere. If $\alpha_i=0$ for some $i$, then $\mathbb{F}e_i$ is a nonzero proper tr-ideal of $D_n$, which implies that $D_n$ is not tr-simple. Conversely, assume that $D_n$ is not tr-simple and let $I\subseteq D_n$ be a nonzero proper tr-ideal. Then, $I=\mathbb{F}e_{i_1}+\cdots+\mathbb{F}e_{i_k}$, for some $i_1$, \dots, $i_k\in\{1,\ldots,n\}$. Since $I$ is a tr-ideal, one gets $\mathrm{tr}(e_{i_\ell})=\alpha_{i_\ell}\in I$, which implies $\alpha_{i_\ell}=0$. Therefore, $\alpha_i=0$ for some $i$.
\end{proof}

Finally, we shall classify the trace structures on $\mathbb{F}\oplus\mathbb{F}$.

Given $\mu_1,\mu_2,\nu_1,\nu_2\in\mathbb{F}$, the quadruple $\mathcal{A}(\mu_1,\mu_2,\nu_1,\nu_2)$ denotes the algebra $\mathbb{F}\oplus\mathbb{F}$, with basis $\{e_1,e_2\}$ (where $e_ie_j=\delta_{ij}e_i$, for all $i$, $j$), and with a unary operation defined by $t(e_k)=\mu_ke_1+\nu_ke_2$, $k\in\{1,2\}$. Note that $\mathcal{A}(\mu_1,\mu_2,\nu_1,\nu_2)\cong\mathcal{A}(\nu_1,\nu_2,\mu_1,\mu_2)$ and $(D_2,t_{(\alpha,\beta)})\cong\mathcal{A}(\alpha,\beta,\alpha,\beta)$.

It is clear that any tr-algebra having $\mathbb{F}\oplus\mathbb{F}$ as underlying algebra structure is isomorphic to some $\mathcal{A}(\mu_1,\mu_2,\nu_1,\nu_2)$. However, not every $\mathcal{A}(\mu_1,\mu_2,\nu_1,\nu_2)$ is a trace algebra. For instance, $\mathcal{A}(\alpha,\beta,0,\alpha+\beta)$ does not necessarily satisfy $t(t(e_1))=t(e_1)t(1)$.

\begin{lemma}\label{rem}
The algebra $\mathcal{A}(\mu_1,\mu_2,\nu_1,\nu_2)$ is a trace algebra if and only if $\mu_1=\nu_1$ and $\mu_2=\nu_2$, or $\mu_2=\nu_1=0$. In addition, $\mathcal{A}$ is tr-prime if and only if $\mu_1=\nu_1$, $\mu_2=\nu_2$ and $(\mu_1,\mu_2)\ne(0,0)$.
\end{lemma}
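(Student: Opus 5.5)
The plan is a direct computation on the basis $\{e_1,e_2\}$, checking the three trace axioms and then the tr-prime condition. Since $\mathcal{A}=\mathbb{F}\oplus\mathbb{F}$ is commutative, axioms (i) and (ii) hold automatically. So being a trace algebra amounts to axiom (iii): $t(t(a)b)=t(a)t(b)$. This is bilinear in $(a,b)$, so it suffices to check it for $a=e_i$ and $b=e_j$ with $i,j\in\{1,2\}$.

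For the trace-algebra criterion, compute both sides. We have $t(e_i)e_j=\mu_i\delta_{1j}e_1+\nu_i\delta_{2j}e_2$. Hence for $j=1$ the left side is $t(\mu_ie_1)=\mu_i(\mu_1e_1+\nu_1e_2)$, and for $j=2$ it is $\nu_i(\mu_2e_1+\nu_2e_2)$. The right side is $t(e_i)t(e_j)=\mu_i\mu_je_1+\nu_i\nu_je_2$. Comparing coefficients gives, for all $i$:
\begin{itemize}
\item $j=1$: $\mu_i\nu_1=\nu_i\nu_1$;
\item $j=2$: $\nu_i\mu_2=\mu_i\mu_2$.
\end{itemize}
So the condition is: $\nu_1(\mu_i-\nu_i)=0$ and $\mu_2(\mu_i-\nu_i)=0$ for $i=1,2$. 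If $\mu_1=\nu_1$ and $\mu_2=\nu_2$, this holds. Otherwise some $\mu_i\ne\nu_i$, which forces $\nu_1=\mu_2=0$. Conversely, $\nu_1=\mu_2=0$ clearly satisfies all four equations. This gives the first claim.

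For tr-primeness, use that the nonzero ideals of $\mathbb{F}\oplus\mathbb{F}$ are $\mathbb{F}e_1$, $\mathbb{F}e_2$ and $\mathcal{A}$, and that $e_1e_2=0$. Thus $\mathcal{A}$ is tr-prime exactly when at most one of $\mathbb{F}e_1$, $\mathbb{F}e_2$ is a tr-ideal. (The product of an ideal with itself, or with $\mathcal{A}$, is nonzero, since $e_i^2=e_i$.) Now $\mathbb{F}e_1$ is a tr-ideal iff $\nu_1=0$, and $\mathbb{F}e_2$ is a tr-ideal iff $\mu_2=0$.

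\emph{Case $\mu_2=\nu_1=0$ with $(\mu_1,\mu_2)\ne(\nu_1,\nu_2)$.} Both lines are tr-ideals and they multiply to zero, so $\mathcal{A}$ is not tr-prime.

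\emph{Case $\mu_1=\nu_1=\alpha$, $\mu_2=\nu_2=\beta$.} Here $\mathbb{F}e_1$ is a tr-ideal iff $\alpha=0$, and $\mathbb{F}e_2$ is a tr-ideal iff $\beta=0$. So $\mathcal{A}$ fails to be tr-prime exactly when $\alpha=\beta=0$. The subcase where both conditions hold and the $\mu,\nu$ agree is just $\alpha=\beta=0$, which is already covered here.

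Combining the two cases gives the stated characterization. There is no real obstacle. The only points needing care are the coefficient bookkeeping in axiom (iii), and confirming that the list of ideals of $\mathbb{F}\oplus\mathbb{F}$ is complete, so that tr-primeness reduces to the pair $\mathbb{F}e_1,\mathbb{F}e_2$.
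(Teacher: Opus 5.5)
Your proposal is correct and follows the paper's approach. The paper's proof consists only of the instruction to analyze the relations coming from $t(t(e_i)e_j)=t(e_i)t(e_j)$, and your coefficient comparison, which yields $\nu_1(\mu_i-\nu_i)=\mu_2(\mu_i-\nu_i)=0$, carries this out. Your treatment of tr-primeness via the tr-ideals $\mathbb{F}e_1$ (a tr-ideal iff $\nu_1=0$) and $\mathbb{F}e_2$ (a tr-ideal iff $\mu_2=0$) fills in the part the paper leaves implicit, in the same spirit as the proof of Lemma~\ref{unnamed}.
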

\begin{proof}
It is enough to analyze all the relations obtained from the equality $t(t(e_i)e_j)=t(e_i)t(e_j)$.
\end{proof}

\subsection{Quadratic extension of the base field}
We assume that $\mathbb{E}$ is a quadratic extension of the base field $\mathbb{F}$. We classify the structure of trace $\mathbb{F}$-algebras on $\mathbb{E}$.

\begin{lemma}\label{tr_simple}
Let $t:\mathbb{E}\to\mathbb{E}$ be a trace map of the $\mathbb{F}$-algebra $\mathbb{E}$. Then either:
\begin{enumerate}
\renewcommand{\labelenumi}{(\roman{enumi})}
\item there exists $\alpha\in\mathbb{E}$ such that $t(x)=\alpha x$, for all $x\in\mathbb{E}$, or
\item there exists $c\in\mathbb{E}\setminus\{0\}$ such that $t(x)=\mathrm{Tr}_{\mathbb{E}/\mathbb{F}}(cx)$, for all $x\in\mathbb{E}$.%$\mathrm{Im}\,t=\mathbb{F}$.
\end{enumerate}
\end{lemma}
\begin{proof}
If $t=0$, then $t$ is of the kind (i) with $\alpha=0$. So, we shall assume that $t\ne0$. Suppose that $t(\mathbb{E})=\mathbb{E}$. Then, for any $x\in\mathbb{E}$, we have $x=t(y)$, for some $y\in\mathbb{E}$. Hence, $t(x)=t(t(y)\cdot1)=t(y)t(1)=\alpha x$, where $\alpha=t(1)$.

Now, suppose that $t(\mathbb{E})=\mathbb{F}\cdot t(u)$, for some $u\in\mathbb{E}$. Then, $\mathbb{F}\cdot t(u)\ni t(ut(u))=t(u)t(u)$, that is, $t(u)t(u)=\lambda_0t(u)$, for some $\lambda_0\in\mathbb{F}$. Since $t(u)\ne0$, we get $t(u)=\lambda_0\in\mathbb{F}$. Hence, $t(\mathbb{E})=\mathbb{F}$.

Now, let $x_0\in\mathbb{E}$ be such that $t(x_0)=1$, and let $\mathrm{Ker}\,t=\mathbb{F}y_0$. Similarly, write $\mathrm{Ker}\,\mathrm{Tr}_{\mathbb{E}/\mathbb{F}}=\mathbb{F}y$, for some $y\in\mathbb{E}$. Note that $\mathrm{Tr}_{\mathbb{E}/\mathbb{F}}(yy_0^{-1}x_0)=0$ implies that $yy_0^{-1}x_0\in\mathrm{Ker}\,\mathrm{Tr}_{\mathbb{E}/\mathbb{F}}=\mathbb{F}y$. Consequently, $y_0^{-1}x_0\in\mathbb{F}$, which would imply that $\{x_0,y_0\}$ is a linearly dependent set, a contradiction. Thus, we may define
\[
c=\mathrm{Tr}_{\mathbb{E}/\mathbb{F}}(yy_0^{-1}x_0)^{-1}yy_0^{-1}.
\]
It is then clear that $t(x)=\mathrm{Tr}_{\mathbb{E}/\mathbb{F}}(cx)$, for all $x\in\mathbb{E}$.
\end{proof}

\begin{remark}
Denote by $t_{\alpha_0}$ the trace map given by $t_{\alpha_0}(x)=\alpha_0x$. It is clear that $(\mathbb{E},t_{\alpha_0})=(\mathbb{E},t_{\alpha_0'})$ if and only if $\alpha_0=\alpha_0'$. In addition, $(\mathbb{E},t_{\alpha_0})\cong(\mathbb{E},t_{\alpha_0'})$ if and only if $\alpha_0'\in\{\varphi(\alpha_0)\mid\varphi\in\mathrm{Aut}(\mathbb{E}/\mathbb{F})\}$.

Denote by $\mathrm{Tr}_c$ the trace map given by $\mathrm{Tr}_c(x)=\mathrm{Tr}_{\mathbb{E}/\mathbb{F}}(cx)$. It is clear that $(\mathbb{E},\mathrm{Tr}_c)=(\mathbb{E},\mathrm{Tr}_{c'})$ if and only if $c=c'$. In addition, $(\mathbb{E},\mathrm{Tr}_c)\cong(\mathbb{E},\mathrm{Tr}_{c'})$ if and only if $c'\in\{\varphi(c)\mid\varphi\in\mathrm{Aut}(\mathbb{E}/\mathbb{F})\}$.

%Let $t_0:\mathbb{E}\to\mathbb{F}$ and $t_0':\mathbb{E}\to\mathbb{F}$ be trace maps, having $\mathbb{F}$ as their image. Then, it is clear that $(\mathbb{E},t_0)\cong(\mathbb{E},t_0')$ if and only if $t_0'\in t_0\circ\mathrm{Aut}(\mathbb{E}/\mathbb{F})$.
\end{remark}

Now, we have:
\begin{lemma}
Let $t$ be a trace map on $\mathbb{E}$ and denote $\mathcal{A}=(\mathbb{E},t)$.
\begin{enumerate}
\item If $t=t_\alpha$, then $\dim_{C(\mathcal{A})}Q(\mathcal{A})=1$,
\item If $t=\mathrm{Tr}_c$, then $\dim_{C(\mathcal{A})}Q(\mathcal{A})=2$.
\end{enumerate}
\end{lemma}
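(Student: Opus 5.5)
Since $\mathbb{E}$ is a field, it has no nonzero proper ideals, so $(\mathbb{E},t)$ is tr-prime for every trace $t$ and the central closure is defined. The plan is to compute the centroid $C(\mathcal{A})$ directly from property (iii) of the central closure. Taking $I=\mathcal{A}$ there, $C(\mathcal{A})$ is the set of $\mathrm{M}(\mathcal{A})$-module endomorphisms of $\mathcal{A}$, that is, the $\mathbb{F}$-linear maps $\chi:\mathbb{E}\to\mathbb{E}$ commuting with all left and right multiplications and with $t$. (Any other nonzero ideal is all of $\mathbb{E}$, so no further cases arise.) Once $C(\mathcal{A})$ is known, we have $Q(\mathcal{A})=C(\mathcal{A})\mathcal{A}=\mathcal{A}$, because these maps send $\mathcal{A}$ into itself. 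Hence $\dim_{C(\mathcal{A})}Q(\mathcal{A})=\dim_{C(\mathcal{A})}\mathbb{E}$.

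First step: a linear map $\chi$ commuting with left multiplications satisfies $\chi(x)=\chi(x\cdot1)=x\chi(1)$. So $\chi$ is multiplication by $c_0=\chi(1)\in\mathbb{E}$, and every such multiplication commutes with left and right multiplications because $\mathbb{E}$ is commutative. It remains to impose the condition $\chi\circ t=t\circ\chi$, i.e.\ $c_0t(x)=t(c_0x)$ for all $x\in\mathbb{E}$.

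In case (i), $t=t_\alpha$, the condition reads $c_0\alpha x=\alpha c_0x$, which always holds. Thus $C(\mathcal{A})=\mathbb{E}$ and $\dim_{\mathbb{E}}\mathbb{E}=1$.

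In case (ii), $t=\mathrm{Tr}_c$ with $c\ne0$, the condition is $c_0\,\mathrm{Tr}_{\mathbb{E}/\mathbb{F}}(cx)=\mathrm{Tr}_{\mathbb{E}/\mathbb{F}}(cc_0x)$ for all $x$.
\begin{itemize}
\item Choose $x$ with $\mathrm{Tr}_{\mathbb{E}/\mathbb{F}}(cx)\neq0$. Such an $x$ exists because $\mathrm{Tr}_c$ is nonzero: as $x$ runs over $\mathbb{E}$ so does $cx$, and the proof of Lemma~\ref{tr_simple} already uses that $\mathrm{Tr}_{\mathbb{E}/\mathbb{F}}$ has a one-dimensional kernel.
\item For this $x$ the right-hand side lies in $\mathbb{F}$, so $c_0\in\mathbb{F}$.
\item Conversely, every $c_0\in\mathbb{F}$ satisfies the condition by $\mathbb{F}$-linearity of the trace.
\end{itemize}
Hence $C(\mathcal{A})=\mathbb{F}$ and $\dim_{\mathbb{F}}\mathbb{E}=2$.

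The main obstacle is justifying that $C(\mathcal{A})$ equals the full endomorphism ring $\mathrm{End}_{\mathrm{M}(\mathcal{A})}(\mathcal{A})$ and that $Q(\mathcal{A})=\mathcal{A}$. I would argue as follows.
\begin{itemize}
\item By property (iii), each $\mathrm{M}(\mathcal{A})$-endomorphism of $\mathcal{A}$ is multiplication by a unique $c\in C(\mathcal{A})$.
\item Conversely, each $c\in C(\mathcal{A})$ maps $\mathcal{A}$ into $Q(\mathcal{A})$ and commutes with $\mathrm{M}(\mathcal{A})$.
\item The set $\{a\in\mathcal{A}: ca\in\mathcal{A}\}$ is then a tr-ideal. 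It is nonzero by property (ii) applied to the submodule $c\mathcal{A}$, so it equals $\mathcal{A}$.
\end{itemize}
Thus $c$ restricts to an endomorphism of $\mathcal{A}$. This identifies $C(\mathcal{A})$ with $\mathrm{End}_{\mathrm{M}(\mathcal{A})}(\mathcal{A})$ and shows $Q(\mathcal{A})=\mathcal{A}$.

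As an alternative check, Lemma~\ref{lemRaz3} applies with $\mathcal{V}=\mathcal{A}$. In case (ii), the alternating trace polynomial $t(x_1)x_2-t(x_2)x_1$ does not vanish on $\mathbb{E}$, which confirms that the dimension is not less than $2$.
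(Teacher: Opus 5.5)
Your proof is correct, but it takes a different route from the paper's. The paper never determines $C(\mathcal{A})$ or $Q(\mathcal{A})$ explicitly; it only bounds the dimension:
\begin{itemize}
\item In (i) it observes that $t_\alpha$ is $\mathbb{E}$-linear, so $\mathbb{E}\subseteq C(\mathcal{A})$ and $\dim_{C(\mathcal{A})}Q(\mathcal{A})\le 1$.
\item In (ii) it takes the upper bound $2$ from $Q(\mathcal{A})=C(\mathcal{A})\mathcal{A}$ and $\dim_{\mathbb{F}}\mathcal{A}=2$.
\item Also in (ii), it gets the lower bound from Razmyslov's rank theorem (Lemma~\ref{lemRaz3}), using the Capelli polynomial $t(x_3x_1)x_2-t(x_3x_2)x_1$ evaluated at $(\beta\alpha^{-1},1,\alpha)$.
\end{itemize}
That lower-bound step is essentially your \emph{alternative check}: setting $x_3=1$ gives your simpler polynomial $t(x_1)x_2-t(x_2)x_1$.

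Your main argument works differently. Using only properties (ii) and (iii) of the central closure and the fact that $(\mathbb{E},t)$ is unital and tr-simple, you show that $Q(\mathcal{A})=\mathcal{A}$ and $C(\mathcal{A})\cong\mathrm{End}_{\mathrm{M}(\mathcal{A})}(\mathcal{A})$. You then compute this commutant by hand, getting $\mathbb{E}$ in case (i) and $\mathbb{F}$ in case (ii).

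This needs a little more setup. You use that all operations of $Q(\mathcal{A})$, the trace included, are $C(\mathcal{A})$-linear, and that $C(\mathcal{A})$ is a field. These give that $c\mathcal{A}$ is a nonzero $\mathrm{M}(\mathcal{A})$-submodule for $c\neq0$, and that $\{a\in\mathcal{A}: ca\in\mathcal{A}\}$ is a tr-ideal. You handle this correctly.

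In return, you avoid the rank theorem entirely and prove more: $(\mathbb{E},t)$ is centrally closed, with an explicitly known centroid. The paper's version is shorter and showcases the Capelli-polynomial technique it reuses in Lemmas~\ref{Cap_3} and \ref{Cap_3v}.

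Both arguments rely on $\mathrm{Tr}_c\neq0$ at the same point: your choice of $x$ with $\mathrm{Tr}_{\mathbb{E}/\mathbb{F}}(cx)\ne0$, and the paper's choice of $\beta\notin\mathrm{Ker}\,t$. This amounts to $\mathrm{Tr}_{\mathbb{E}/\mathbb{F}}\neq0$, i.e.\ separability of $\mathbb{E}/\mathbb{F}$, which the paper also assumes tacitly. It is harmless over the finite fields used later.
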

\begin{proof}
\noindent(i) Note that, in this case, $t$ is an $\mathbb{E}$-linear map. Hence, $\mathbb{E}\subseteq C(\mathcal{A})$. Thus, $\dim_{C(\mathcal{A})}Q(\mathcal{A})\le1$.

\noindent(ii) Since $\dim_\mathbb{F}\mathcal{A}=2$, we obtain $\dim_{C(\mathcal{A})}Q(\mathcal{A})\le2$. The $\mathbb{F}$-linear map $t:\mathcal{A}\to\mathbb{F}$ has nontrivial kernel. Let $0\ne\alpha\in\mathrm{Ker}\,t$ and $\beta\not\in\mathrm{Ker}\,t$. The polynomial  $f(x_1,x_2,x_3):=t(x_3x_1)x_2-t(x_3x_2)x_1$ is Capelli of order $2$. It is not a polynomial identity of $\mathcal{A}$, since $f(\beta\alpha^{-1},1,\alpha)=t(\beta)\ne0$. Thus, from Lemma~\ref{lemRaz3}, we obtain $\dim_{C(\mathcal{A})}Q(\mathcal{A})\ge2$.
\end{proof}

\subsection{$2$-dimensional nonsemiprime tr-prime algebra}
Let $\mathcal{A}=\mathbb{F}[\xi]/\langle\xi^2\rangle$, and let $v=\xi+\langle\xi^2\rangle\in\mathcal{A}$. Given $\lambda\in\mathbb{F}$, we define the $\mathbb{F}$-linear map $t_\lambda:\mathcal{A}\to\mathbb{F}$ via $t_\lambda(1)=\lambda$ and $t_\lambda(v)=1$. This trace algebra is denoted by $(\mathbb{F}\cdot1+\mathbb{F}v,\lambda)$. It is clear that $(\mathbb{F}\cdot1+\mathbb{F}v,\lambda)\cong(\mathbb{F}\cdot1+\mathbb{F}v,\lambda')$ if and only if $\lambda=\lambda'$.

\begin{lemma}\label{trprime_nonsemiprime}
Let $\mathcal{A}$ be a $2$-dimensional tr-prime algebra that is not semiprime as an ordinary algebra. Then $\mathcal{A}\cong(\mathbb{F}\cdot1+\mathbb{F}v,\lambda)$, for some $\lambda\in\mathbb{F}$.
\end{lemma}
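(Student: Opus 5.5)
Since $\mathcal{A}$ is not semiprime, I will first pick a nonzero ideal $N$ with $N^2=0$ and pin down the structure of $\mathcal{A}$. As $\dim\mathcal{A}=2$, the ideal $N$ is $1$- or $2$-dimensional. If $N=\mathcal{A}$, then $\mathcal{A}^2=0$, so $t(ab)=0$ for all $a,b$, and every subspace is an ideal. Tr-primeness then forces a contradiction: $\mathcal{A}$ itself is a nonzero tr-ideal with $\mathcal{A}\mathcal{A}=0$. Hence $N=\mathbb{F}n$ is one-dimensional with $n^2=0$.

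Next I will show $t(n)\notin N$. If $t(n)\in N$, then $N$ is a tr-ideal with $N^2=0$, again contradicting tr-primeness. Write $u=t(n)$, which is a central element outside $N$, so $\{u,n\}$ is a basis. Then $u^2=t(n)t(n)=t(t(n)n)=t(un)$. Since $un\in N$, we get $un=\kappa n$ and $u^2=\kappa u$. The case $\kappa=0$ must be ruled out: there $u^2=0$ and $un=0$, so $\mathcal{A}^2=0$, which contradicts the previous paragraph. So $\kappa\neq 0$, and $e:=\kappa^{-1}u$ is a central idempotent with $en=n$. As $nu=un$ ($u$ is central), we also get $ne=n$. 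Thus $e$ acts as the identity on the basis $\{e,n\}$, so $\mathcal{A}$ is unital with $1=e$ and $n^2=0$, and $\mathcal{A}\cong\mathbb{F}[\xi]/\langle\xi^2\rangle$ with $v\leftrightarrow n$.

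It remains to normalize the trace. We have $t(n)=u=\kappa\cdot 1\in\mathbb{F}\cdot1$ with $\kappa\neq0$. Also $t(1)=t(t(\kappa^{-1}n)\cdot1)$, and by axiom (iii) this equals $t(\kappa^{-1}n)t(1)=1\cdot t(1)$, which gives no information. So I instead argue that $t(\mathcal{A})\subseteq\mathcal{A}$ is an image whose elements satisfy $t(a)^2=t(t(a)a)\in t(\mathcal{A})$. Write $t(1)=\lambda_0+\mu n$. I will use axiom (iii) with the elements $t(1)$ and $1$: $t(t(1))=t(1)^2$. The left side is $\lambda_0t(1)+\mu\kappa$. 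The right side is $\lambda_0^2+2\lambda_0\mu n$. Comparing the $n$-coefficients gives $\lambda_0\mu=2\lambda_0\mu$, so $\lambda_0\mu=0$. Comparing constants gives $\lambda_0^2+\mu\kappa=\lambda_0^2$, so $\mu\kappa=0$. Since $\kappa\neq0$, $\mu=0$, and therefore $t(1)\in\mathbb{F}$.

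Finally I rescale, replacing $n$ by $v=\kappa^{-1}n$. This gives $t(v)=1$ and $t(1)=\lambda$ for some $\lambda\in\mathbb{F}$, hence $\mathcal{A}\cong(\mathbb{F}\cdot1+\mathbb{F}v,\lambda)$. The main obstacle is the step showing the trace takes values in $\mathbb{F}\cdot1$, namely that $t(1)$ has no $v$-component; the identity $t(t(a)b)=t(a)t(b)$ applied with $a=b=1$ is what handles it.
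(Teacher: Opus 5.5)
Your proof is correct, but it takes a different route from the paper's. The paper first fixes the algebra structure. It invokes the Jacobson radical and Wedderburn--Malcev, together with the standing assumption that $\mathcal{A}$ is unital, to get $\mathcal{A}=\mathbb{F}\cdot1+\mathbb{F}v$ with $v^2=0$. It then shows $t(\mathcal{A})\subseteq\mathbb{F}$ by contradiction: if some $t(a)\notin\mathbb{F}$, then $\{1,t(a)\}$ is a basis, axiom (iii) gives $t(x)=xt(1)$ for all $x$, and $\mathbb{F}v$ becomes a square-zero tr-ideal. Only at the end does it use tr-primeness to get $t(v)\neq0$.

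You reverse the order. You use tr-primeness first, to force $t(n)\notin\mathbb{F}n$. You then \emph{construct} the identity element as $\kappa^{-1}t(n)$, using $t(n)^2=t(t(n)n)$. With that unit, $t(n)\in\mathbb{F}\cdot1$ is automatic, and only $t(1)\in\mathbb{F}$ remains, which you get by comparing coefficients in $t(t(1))=t(1)^2$.

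What your argument buys is that it is entirely elementary and does not assume unitality. It shows that a $2$-dimensional tr-prime algebra that is not semiprime is automatically unital. The nilpotent case also needs no separate treatment: the dichotomy $\kappa=0$ (forcing $\mathcal{A}^2=0$) versus $\kappa\neq0$ (producing a unit) already excludes it. The paper's version is shorter given standard structure theory, and it matches the case split on $J(\mathcal{A})$ used in Proposition~\ref{2_dim}.

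The side remark about $t(a)^2=t(t(a)a)$ in your third paragraph is never used and can be deleted.
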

\begin{proof}
Since $\mathcal{A}$ is not semiprime as an ordinary algebra, it has a nontrivial Jacobson radical. If $J(\mathcal{A})=\mathcal{A}$, then $\mathcal{A}$ is nilpotent, which contradicts the tr-primality of $\mathcal{A}$. Hence, $\dim J(\mathcal{A})=1$. In particular, $\mathcal{A}/J(\mathcal{A})\cong\mathbb{F}$. By Wedderburn-Malcev Theorem and since $\mathcal{A}$ is unital, we obtain $\mathcal{A}\cong\mathbb{F}\cdot1+\mathbb{F}v$, where $v^2=0$.

First, assume that there exists $a\in\mathcal{A}$ with $t(a)\notin\mathbb{F}$. Then $\{1,t(a)\}$ is a $\mathbb{F}$-basis of $\mathcal{A}$. In particular, each $x\in\mathcal{A}$ can be written as a linear combination of $1$ and $t(v)$. Since $t$ is $\mathbb{F}$-linear and trace-linear, we get $t(x)=xt(1)$. In particular, $t(v)=vt(1)\in\mathbb{F}v$, since $\mathbb{F}v$ is an ideal. Hence, $\mathbb{F}v$ is a nilpotent tr-ideal, contradicting the tr-primality of $\mathcal{A}$. Thus, the range of the trace map is contained in $\mathbb{F}$.

Now, if $t(v)=0$, then $\mathbb{F}v$ is a nilpotent tr-ideal, once again contradicting the tr-primality of $\mathcal{A}$. Hence, $t(v)=\gamma\in\mathbb{F}\setminus\{0\}$. Rescaling, we may assume that $t(v)=1$. Therefore, we obtain $\mathcal{A}\cong(\mathbb{F}\cdot1+\mathbb{F}v,t(1))$.
\end{proof}

\subsection{Classification of two-dimensional tr-prime algebras}
Recall that we assume that $\mathbb{F}$ is an arbitrary field in this section. We classify $2$-dimensional tr-prime algebras over $\mathbb{F}$.

\begin{proposition}\label{2_dim}
A unital $2$-dimensional tr-prime algebra over an arbitrary field $\mathbb{F}$ is isomorphic to one of the following:
\begin{enumerate}
\item $(D_2,t_{(\alpha,\beta)})$, where $\alpha$, $\beta\in\mathbb{F}$ and $(\alpha,\beta)\ne(0,0)$, $D_2=\mathbb{F}\oplus\mathbb{F}$ and $t_{(\alpha,\beta)}(a,b)=\alpha a+\beta b$,
\item A quadratic extension $\mathbb{E}$ of $\mathbb{F}$, endowed with the trace $t_{\alpha_0}(x)=\alpha_0x$, for some $\alpha_0\in\mathbb{E}$,
\item A quadratic extension $\mathbb{E}$ of $\mathbb{F}$, endowed with the trace $\mathrm{Tr}_c(x)=\mathrm{Tr}_{\mathbb{E}/\mathbb{F}}(cx)$, for some $c\in\mathbb{E}\setminus\{0\}$,
\item $(\mathbb{F}\cdot1+\mathbb{F}v,\lambda)$, where $\lambda\in\mathbb{F}$, $v^2=0$ and trace $t_\lambda(a\cdot1+bv)=\lambda a +b$.
\end{enumerate}
\end{proposition}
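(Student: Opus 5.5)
The plan is to split according to the underlying ordinary algebra structure of $\mathcal{A}$, using the classification of unital $2$-dimensional associative algebras over $\mathbb{F}$. Since $\mathcal{A}$ is unital of dimension $2$, it is generated by $1$ and any $x\notin\mathbb{F}\cdot1$. Hence $\mathcal{A}\cong\mathbb{F}[\xi]/\langle p(\xi)\rangle$ for a monic quadratic polynomial $p$, and in particular $\mathcal{A}$ is commutative. There are three possibilities for $p$. If $p$ has two distinct roots in $\mathbb{F}$, then $\mathcal{A}\cong\mathbb{F}\oplus\mathbb{F}$ by the Chinese remainder theorem. If $p$ is irreducible, then $\mathcal{A}$ is a quadratic extension $\mathbb{E}$ of $\mathbb{F}$; this includes the inseparable case in characteristic $2$, which the lemmas above allow since they use $\mathrm{Tr}_{\mathbb{E}/\mathbb{F}}$ only abstractly. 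If $p$ has a repeated root, then $\mathcal{A}\cong\mathbb{F}[\xi]/\langle\xi^2\rangle$ after the translation $\xi\mapsto\xi-r$. The first two cases are exactly the semiprime ones as ordinary algebras, and the last is the non-semiprime case.

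\textbf{Case $\mathcal{A}\cong\mathbb{F}\oplus\mathbb{F}$.} Every trace structure on $\mathbb{F}\oplus\mathbb{F}$ is some $\mathcal{A}(\mu_1,\mu_2,\nu_1,\nu_2)$. By Lemma~\ref{rem}, tr-primality forces $\mu_1=\nu_1=\alpha$, $\mu_2=\nu_2=\beta$ and $(\alpha,\beta)\ne(0,0)$. Then $\mathcal{A}\cong\mathcal{A}(\alpha,\beta,\alpha,\beta)\cong(D_2,t_{(\alpha,\beta)})$, which is item (i).

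\textbf{Case $\mathcal{A}\cong\mathbb{E}$.} Lemma~\ref{tr_simple} applies directly: the trace is either $t_{\alpha_0}$ or $\mathrm{Tr}_c$ with $c\neq0$, giving items (ii) and (iii). Here I would also record that these are genuinely tr-prime, so that the list is not vacuous. Since $\mathbb{E}$ is a field, its only ideals are $0$ and $\mathbb{E}$, so any two nonzero tr-ideals multiply to $\mathbb{E}\neq0$. In particular the degenerate choice $\alpha_0=0$ is also allowed, as the statement permits.

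\textbf{Case $\mathcal{A}$ not semiprime.} This is handled by Lemma~\ref{trprime_nonsemiprime}, which gives item (iv) with $\lambda=t(1)$. For consistency I would check that $(\mathbb{F}\cdot1+\mathbb{F}v,\lambda)$ is indeed tr-prime. The only proper nonzero ideal is $\mathbb{F}v$, and it is not a tr-ideal because $t(v)=1$. Hence the only nonzero tr-ideal is $\mathcal{A}$ itself, and $\mathcal{A}^2\neq0$.

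The main obstacle is only bookkeeping in the first step: making sure the trichotomy of quadratic polynomials covers every unital $2$-dimensional algebra, including characteristic $2$ and inseparable extensions. Once that reduction is in place, each case is settled by Lemma~\ref{rem}, Lemma~\ref{tr_simple} or Lemma~\ref{trprime_nonsemiprime}.
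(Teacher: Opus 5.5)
Your route is essentially the paper's. The paper splits according to whether $J(\mathcal{A})=0$ (so $\mathcal{A}$ is $\mathbb{F}\oplus\mathbb{F}$ or a quadratic extension) and then invokes Lemma~\ref{rem}, Lemma~\ref{tr_simple} and Lemma~\ref{trprime_nonsemiprime}, exactly as you do. Your trichotomy for $\mathbb{F}[\xi]/\langle p(\xi)\rangle$ is a more elementary way to get the same case split. Checking that the listed algebras are tr-prime is extra but harmless.

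There is, however, a step that fails. Your claim that the inseparable case is covered ``since the lemmas use $\mathrm{Tr}_{\mathbb{E}/\mathbb{F}}$ only abstractly'' is false.

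\begin{itemize}
\item \emph{Where the lemma breaks.} If $\mathbb{E}/\mathbb{F}$ is an inseparable quadratic extension (characteristic $2$, $\mathbb{E}=\mathbb{F}(\theta)$ with $\theta^2\in\mathbb{F}$), then $\mathrm{Tr}_{\mathbb{E}/\mathbb{F}}\equiv0$. So the proof of Lemma~\ref{tr_simple} fails at the step ``write $\mathrm{Ker}\,\mathrm{Tr}_{\mathbb{E}/\mathbb{F}}=\mathbb{F}y$'', and item (iii) collapses to the zero trace.
\item \emph{Traces that are missed.} Every nonzero $\mathbb{F}$-linear functional $\ell:\mathbb{E}\to\mathbb{F}$ gives a trace $x\mapsto\ell(x)1$. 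Axiom (iii) holds because $\ell$ is $\mathbb{F}$-linear with values in $\mathbb{F}$, and $(\mathbb{E},\ell)$ is tr-prime because $\mathbb{E}$ is a field.
\item \emph{Explicit counterexample.} Take $\mathbb{F}=\mathbb{F}_2(s)$, $\theta^2=s$ and $t(a+b\theta)=a$. This tr-prime algebra is isomorphic to none of (i)--(iv).
  \begin{itemize}
  \item (i) and (iv) fail already as ordinary algebras.
  \item (iii) fails because any algebra isomorphic to $\mathbb{E}$ is again inseparable over $\mathbb{F}$, so every $\mathrm{Tr}_c$ vanishes, while $t\neq0$.
  \item (ii) fails: an isomorphism $\psi$ onto $(\mathbb{E}',t_{\alpha_0})$ would give $\alpha_0\psi(\theta)=\psi(t(\theta))=0$, hence $\alpha_0=0$. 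That contradicts $\psi(t(1))=\psi(1)=1$.
  \end{itemize}
\end{itemize}

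So this gap cannot be closed by a better argument: the statement itself is only correct when the quadratic extension is separable. That holds, for instance, over perfect fields, in particular finite ones, which is all the paper uses later. Otherwise, item (iii) must be replaced by ``$t(x)=\ell(x)1$ for a nonzero $\mathbb{F}$-linear $\ell:\mathbb{E}\to\mathbb{F}$''. The family $\{\mathrm{Tr}_c\}$ exhausts these functionals exactly when the trace form is nondegenerate.

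The paper's proof depends on separability in the same implicit way. You should state this restriction explicitly rather than assert that the inseparable case is handled.
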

\begin{proof}
Note that $\mathcal{A}$ cannot be nilpotent. If $J(\mathcal{A})=0$, then the underlying algebra is either $\mathcal{A}=\mathbb{F}\oplus\mathbb{F}$ or a quadratic extension of $\mathbb{F}$. Then, $\mathcal{A}$ is either isomorphic to $(D_2,t_{(\alpha,\beta)})$ for some $\alpha$, $\beta\in\mathbb{F}$ with $(\alpha,\beta)\ne(0,0)$ (Lemma~\ref{rem}), or $\mathcal{A}$ is isomorphic to one of the algebras of the kind (ii) or (iii) (Lemma~\ref{tr_simple}).

If $J(\mathcal{A})\ne0$, then Lemma~\ref{trprime_nonsemiprime} gives that $\mathcal{A}$ is isomorphic to the algebra of the kind (iv).
\end{proof}

As an application, we obtain the following results.

\begin{lemma}\label{Cap_3}
Assume that a tr-prime $\mathbb{F}$-algebra $\mathcal{A}$ satisfies $x^q-x$ and all Capelli polynomials of order $3$, where $|\mathbb{F}|=q$. Then, $\mathcal{A}$ is isomorphic to an $\mathbb{F}$-subalgebra of $D_2(\mathbb{F})$.
\end{lemma}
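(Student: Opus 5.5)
Since $\mathcal{A}$ satisfies $x^q-x$, every element $a$ satisfies $a^q=a$. Hence $\mathcal{A}$ has no nonzero nilpotent elements, so it is reduced; in particular its Jacobson radical is trivial. Commutativity also follows: a reduced algebra satisfying $x^q=x$ is a subdirect product of division rings satisfying $x^q=x$, and these are fields by Jacobson's theorem. Moreover each such field is a quotient of $\mathcal{A}$, so all its elements are roots of $x^q-x$. Such a field therefore has at most $q$ elements and must equal $\mathbb{F}$. Thus $\mathcal{A}$ is a subdirect product of copies of $\mathbb{F}$.

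Next I bound the dimension using the Capelli polynomials. By Lemma~\ref{lemRaz3}, applied with $\mathcal{V}=\mathcal{A}$ and $m=3$, the vanishing of all Capelli polynomials of order $3$ gives $\dim_{C(\mathcal{A})}Q(\mathcal{A})\le2$. I then need to pass from $C(\mathcal{A})$-dimension to $\mathbb{F}$-dimension, and I expect this to be the main obstacle. My plan is to show that $C(\mathcal{A})=\mathbb{F}$.

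Since $\mathcal{A}$ is unital and commutative, $\mathcal{A}\subseteq Q(\mathcal{A})=C(\mathcal{A})\mathcal{A}$. Consider an element $c\in C(\mathcal{A})$ and the nonzero ideal $I$ with $cI\subseteq\mathcal{A}$, which is guaranteed by the central closure construction. For $a\in I$ we have $ca\in\mathcal{A}$, so $(ca)^q=ca$ and also $a^q=a$. This gives $c^qa=ca$ for all $a\in I$, and the uniqueness in property (iii) then forces $c^q=c$. So every element of the field $C(\mathcal{A})$ is a root of $x^q-x$, and hence $C(\mathcal{A})=\mathbb{F}$.

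Consequently $\dim_{\mathbb{F}}\mathcal{A}\le\dim_{\mathbb{F}}Q(\mathcal{A})\le2$. If $\dim\mathcal{A}=1$, then $\mathcal{A}\cong\mathbb{F}$, which embeds diagonally into $D_2(\mathbb{F})$. If $\dim\mathcal{A}=2$, Proposition~\ref{2_dim} applies. Case (iv) is excluded because $v$ is nilpotent. Cases (ii) and (iii) are excluded because a quadratic extension $\mathbb{E}$ has elements not satisfying $x^q=x$. Hence the underlying algebra is $D_2(\mathbb{F})$.

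The statement concerns $\mathbb{F}$-subalgebras, so the trace is not part of the embedding, and the conclusion follows. If the authors intend the unital case only, the one-dimensional case is simply $\mathbb{F}\cdot 1\subseteq D_2(\mathbb{F})$.
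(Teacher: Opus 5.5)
Your proof is correct, but it takes a different route from the paper after the common first step of using Lemma~\ref{lemRaz3} to get $\dim_{C(\mathcal{A})}Q(\mathcal{A})\le 2$.

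The paper applies Proposition~\ref{2_dim} to $Q(\mathcal{A})$, viewed as an algebra over the a priori unknown field $C(\mathcal{A})$. It then places $\mathcal{A}$ inside the subring $\{x\in Q(\mathcal{A})\mid x^q=x\}$, which is $\mathbb{F}$ or $\mathbb{F}\oplus\mathbb{F}$ in each of the four cases.

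You instead show $C(\mathcal{A})=\mathbb{F}$ by a Frobenius argument on the centroid. For $a\in I$ you have $c^qa=(ca)^q=ca$, so $(c^q-c)I=0$ with $I\neq0$ inside the field $C(\mathcal{A})$. Hence $\mathcal{A}=Q(\mathcal{A})$ is at most $2$-dimensional over $\mathbb{F}$, and you apply the classification to $\mathcal{A}$ itself.

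What your route buys: it shows $\mathcal{A}$ is centrally closed and finite-dimensional, and it needs Proposition~\ref{2_dim} only over the base field $\mathbb{F}$, not over $C(\mathcal{A})$. What the paper's route buys: it is the template reused in Lemmas~\ref{Cap_3v} and~\ref{Cap_3_weak}. There the identities $x^{q^2}-x$ and $(x^q-x)^2$ no longer force $C(\mathcal{A})=\mathbb{F}$, so computing fixed points inside $Q(\mathcal{A})$ is the more flexible tool.

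Two minor remarks on your first paragraph. First, it is dispensable. Nothing later uses commutativity or the subdirect decomposition: $\mathcal{A}\subseteq Q(\mathcal{A})$ is property (i), and $(ca)^q=c^qa^q$ only uses that $c$ is a scalar of the $C(\mathcal{A})$-algebra $Q(\mathcal{A})$. Second, ``reduced, in particular $J(\mathcal{A})=0$'' is not a valid implication in general. It does hold here, because $a^{q-1}$ is idempotent and $a=a\cdot a^{q-1}$ for every $a$.
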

\begin{proof}
Since $\mathcal{A}$ satisfies all Capelli polynomials of order $3$, Razmyslov Rank Theorem (Lemma~\ref{lemRaz3}) implies that $\dim_{C(\mathcal{A})}Q(\mathcal{A})\leq2$. The case $\dim_{C(\mathcal{A})}Q(\mathcal{A})=1$ is trivial; hence, let us suppose $\dim_{C(\mathcal{A})}Q(\mathcal{A})=2$. Once $\mathcal{A}$ satisfies $x^q-x$, then it should be a subring of the fixed ring $\{x\in Q(\mathcal{A})\mid x^q=x\}$. Since $Q(\mathcal{A})$ is tr-prime, then it is one of the algebras listed in Proposition~\ref{2_dim}. Then, the possibilities of the fixed subring are either $\mathbb{F}$ or $\mathbb{F}\oplus\mathbb{F}$. The result is proved.
\end{proof}

The trace map is always linear with respect to the base field. If we allow a smaller base field, then we obtain a mild variation of the previous lemma:
\begin{lemma}\label{Cap_3v}
Assume that a tr-prime $\mathbb{F}$-algebra $\mathcal{A}$ satisfies $x^{q^2}-x$ and all Capelli polynomials of order $3$, where $|\mathbb{F}|=q$. Then, $\mathcal{A}$ is isomorphic to an $\mathbb{F}$-subalgebra of $D_2(\mathbb{E})$ or an $\mathbb{F}$-subalgebra of $\mathbb{E}$, where $\mathbb{E}/\mathbb{F}$ is a quadratic extension and $\mathbb{E}$ has one of the trace structures of Lemma~\ref{tr_simple}.
\end{lemma}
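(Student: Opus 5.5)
The plan follows the proof of Lemma~\ref{Cap_3} closely; the only change is that the Frobenius-type identity $x^{q^2}-x$ now allows coefficients in $\mathbb{F}_{q^2}$.

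\textbf{Step 1 (reduction to the central closure).} Since $\mathcal{A}$ satisfies all Capelli polynomials of order $3$, Lemma~\ref{lemRaz3}, applied with $\mathcal{V}=\mathcal{A}$, gives $\dim_{C(\mathcal{A})}Q(\mathcal{A})\le 2$. Write $K=C(\mathcal{A})$. Then $Q(\mathcal{A})$ is a tr-prime $K$-algebra of $K$-dimension at most $2$, and the embedding $\mathcal{A}\subseteq Q(\mathcal{A})$ respects the trace. Every element of $\mathcal{A}$ satisfies $a^{q^2}=a$, so $\mathcal{A}$ lies in the set $R=\{x\in Q(\mathcal{A})\mid x^{q^2}=x\}$. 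This set is closed under multiplication. It is also closed under addition: elements of $\mathcal{A}$ commute, and $R$ is determined by $\mathcal{A}$.

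\textbf{Step 2 (case analysis on $Q(\mathcal{A})$).} If $\dim_K Q(\mathcal{A})=1$, then $Q(\mathcal{A})=K$ is a field. In that case $\mathcal{A}$ is a finite subring of $\{x\in K\mid x^{q^2}=x\}$, which is the unique subfield of $K$ with at most $q^2$ elements. That subfield is therefore $\mathbb{F}$ or a quadratic extension $\mathbb{E}$ of $\mathbb{F}$. The trace restricted to this subfield is an $\mathbb{F}$-linear trace map, so Lemma~\ref{tr_simple} classifies it.

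If $\dim_K Q(\mathcal{A})=2$, Proposition~\ref{2_dim}, applied over $K$, lists the possibilities for $Q(\mathcal{A})$:
\begin{itemize}
\item[(a)] For $D_2(K)$, the solutions of $x^{q^2}=x$ are the pairs with both coordinates in $\mathbb{F}_{q^2}\cap K$. This gives a subalgebra of $D_2(\mathbb{E})$ or $D_2(\mathbb{F})\subseteq D_2(\mathbb{E})$.
\item[(b)] For a quadratic field extension $L$ of $K$, the solutions form a finite subfield of size at most $q^2$, so this case is handled as in the one-dimensional case.
\item[(c)] For $K\cdot1+Kv$ with $v^2=0$, we have $(a+bv)^{q^2}=a^{q^2}$ in characteristic $p$. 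Hence $x^{q^2}=x$ forces $b=0$, and $R\subseteq K$.
\end{itemize}

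\textbf{Step 3 (transporting the trace).} In each case $\mathcal{A}$, as an $\mathbb{F}$-algebra with trace, is an $\mathbb{F}$-subalgebra of $D_2(\mathbb{E})$ or of $\mathbb{E}$, and the trace is the restriction of the trace of $Q(\mathcal{A})$. When $\mathcal{A}$ lands inside a field $\mathbb{E}$, the restricted trace is an $\mathbb{F}$-linear trace on $\mathbb{E}$ taking values in $\mathbb{E}$, so Lemma~\ref{tr_simple} applies.

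\textbf{Main obstacle.} The delicate point is identifying the trace on the fixed subring so that it lands in the stated structures. In the $D_2(K)$ case, one must check that the restriction is a trace of the form $t_{(\alpha,\beta)}$ with values in the fixed subring. For this I will use that $t(e_i)$ is central and is itself fixed under $x\mapsto x^{q^2}$, because it is the trace of an element of $\mathcal{A}$. That fixedness places the trace values in the fixed subring.
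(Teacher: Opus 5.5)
Your proposal is correct and follows the paper's own route: the rank bound from Lemma~\ref{lemRaz3}, then Proposition~\ref{2_dim} applied to $Q(\mathcal{A})$ over $C(\mathcal{A})$, then a case-by-case computation of $\{x\in Q(\mathcal{A})\mid x^{q^2}=x\}$, and finally Lemma~\ref{tr_simple} for the trace; your case assignment ($D_2(K)$ gives $D_2(\mathbb{E})$, while the quadratic-extension and $K\cdot1+Kv$ cases give a field) is the right one, whereas the paper's one-line proof swaps the first two cases. The extra trace analysis in your last paragraph goes beyond what the paper does, and if you keep it you should justify $e_1,e_2\in\mathcal{A}$: when $Q(\mathcal{A})\cong D_2(K)$, $\mathcal{A}$ cannot be a field, since a tr-prime field is its own central closure, so the finite reduced commutative ring $\mathcal{A}$ must contain a nontrivial idempotent of $D_2(K)$.
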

\begin{proof}
As in the proof of Lemma~\ref{Cap_3}, we obtain that $Q(\mathcal{A})$ is one of the algebras listed in Proposition~\ref{2_dim}: $D_2(C(\mathcal{A}))$, a quadratic extension of $C(\mathcal{A})$ as a $C(\mathcal{A})$-algebra, or $C(\mathcal{A})\cdot1+C(\mathcal{A})v$. The fixed algebra $\{x\in Q(\mathcal{A})\mid x^{q^2}=x\}$ is contained in $\mathbb{E}$, in the first and third cases, and is contained in $D_2(\mathbb{E})$ in the second case. The structure of trace $\mathbb{F}$-algebras on $\mathbb{E}$ is described in Lemma~\ref{tr_simple}.
\end{proof}

\begin{lemma}\label{Cap_3_weak}
Assume that a tr-prime $\mathbb{F}$-algebra $(\mathcal{A},t)$ satisfies $(x^q-x)^2$, $t(x)^q-t(x)$ and all Capelli polynomials of order $3$, where $q=|\mathbb{F}|$. If $\mathcal{A}$ does not satisfy $x^q-x$, then it is isomorphic to $(\mathbb{F}\cdot1+\mathbb{F}v,\lambda)$, where $v^2=0$, for some $\lambda\in\mathbb{F}$.
\end{lemma}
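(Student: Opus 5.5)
We follow the same route as in Lemma~\ref{Cap_3}. Since $\mathcal{A}$ is tr-prime and satisfies all Capelli polynomials of order $3$, Lemma~\ref{lemRaz3} gives $\dim_{C(\mathcal{A})}Q(\mathcal{A})\le 2$. The plan is to show that the hypotheses force $C(\mathcal{A})=\mathbb{F}$, so that $\mathcal{A}$ is itself a tr-prime algebra of dimension at most $2$ over $\mathbb{F}$. We then read off the answer from Proposition~\ref{2_dim}.

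\textbf{Step 1: reduce to dimension at most $2$.} Because $\mathcal{A}$ satisfies $(x^q-x)^2$, every element $a$ is algebraic over $\mathbb{F}$. Its minimal polynomial divides $(x^q-x)^2$, so $\mathbb{F}[a]$ is a finite algebra of dimension at most $2q$. The identity $t(x)^q=t(x)$ forces all traces to lie in a copy of $\mathbb{F}$. If $\dim_{C(\mathcal{A})}Q(\mathcal{A})=1$, then $\mathcal{A}$ embeds in the field $C(\mathcal{A})$. A reduced commutative algebra satisfying $(x^q-x)^2$ satisfies $x^q-x$, since $a^q-a$ is nilpotent and hence zero. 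This contradicts the hypothesis, so $\dim_{C(\mathcal{A})}Q(\mathcal{A})=2$. Then $Q(\mathcal{A})$ is one of the algebras of Proposition~\ref{2_dim}, taken over the field $K=C(\mathcal{A})$. If $Q(\mathcal{A})$ were $D_2(K)$ or a field extension of $K$, then $Q(\mathcal{A})$, and hence $\mathcal{A}$, would be reduced. The same argument as above would then give $x^q-x$, again a contradiction. Hence $Q(\mathcal{A})\cong(K\cdot1+Kv,\lambda)$ with $v^2=0$ and trace $t(a+bv)=\lambda a+b$.

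\textbf{Step 2: show $K=\mathbb{F}$ and identify $\mathcal{A}$.} Every element of $\mathcal{A}$ has the form $a+bv$ with $a,b\in K$. We have $(a+bv)^q-(a+bv)=a^q-a+(qa^{q-1}-1)bv=a^q-a-bv$, since $\operatorname{char}\mathbb{F}\mid q$. Squaring gives $(a^q-a)^2-2(a^q-a)bv$. For this to vanish we need $a^q=a$, that is, $a\in\mathbb{F}$. The trace condition gives $t(a+bv)=\lambda a+b$ with $(\lambda a+b)^q=\lambda a+b$. Taking $a=0$ (elements $bv\in\mathcal{A}$) yields $b\in\mathbb{F}$, and we get $\lambda\in\mathbb{F}$ as well, since $1\in\mathcal{A}$. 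The fact that $1\in\mathcal{A}$ deserves care: it holds because $\mathcal{A}$ is unital in our variety. Therefore $\mathcal{A}\subseteq\mathbb{F}\cdot1+\mathbb{F}v$ inside $Q(\mathcal{A})$.

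\textbf{Step 3: exclude the proper subalgebra.} The only proper unital subalgebra of $\mathbb{F}\cdot1+\mathbb{F}v$ is $\mathbb{F}$, which satisfies $x^q-x$. Hence $\mathcal{A}=\mathbb{F}\cdot1+\mathbb{F}v$ with the restricted trace, which is $(\mathbb{F}\cdot1+\mathbb{F}v,\lambda)$.

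The main obstacle is the rigour of Step 1. We need that $\mathcal{A}$ sits inside $Q(\mathcal{A})$ with $K$-linear structure compatible with the trace. We also need that the reducedness argument transfers: a nonzero nilpotent element of $Q(\mathcal{A})=K\mathcal{A}$ must produce one in $\mathcal{A}$. This transfer uses property (ii) of the central closure, namely that nonzero submodules meet $\mathcal{A}$. First I would try to avoid it by observing that if $\mathcal{A}$ has no nonzero nilpotents, then $a^q-a$, being nilpotent, vanishes directly in $\mathcal{A}$. So only the non-reduced case needs $Q(\mathcal{A})$, and there the classification of $Q(\mathcal{A})$ is exactly case (iv).
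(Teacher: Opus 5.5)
Your proof is correct and follows the paper's route. You embed $\mathcal{A}$ in its central closure $Q$ over $K=C(\mathcal{A})$, use Proposition~\ref{2_dim} to discard the reduced cases (where $(x^q-x)^2$ forces $x^q-x$), and land in $\mathbb{F}\cdot1+Kv$. The only variation is at the end. The paper uses tr-primality of $\mathcal{A}$: a zero-trace element of $\mathcal{W}\subseteq Kv$ spans a nilpotent tr-ideal, so $\dim_\mathbb{F}\mathcal{W}=1$. You instead use the normalized trace $t(a+bv)=\lambda a+b$ on $Q$ together with $t(x)^q=t(x)$ to force $\lambda,b\in\mathbb{F}$.

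Your closing worry is unnecessary. You only need that reducedness of $Q$ passes to the subalgebra $\mathcal{A}\subseteq Q$, which is trivial and does not use property (ii) of the central closure.
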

\begin{proof}
As in the proof of Lemma~\ref{Cap_3}, $\mathcal{A}$ is a $\mathbb{F}$-subalgebra of a tr-prime algebra $Q$ over a field $\mathbb{E}\supseteq\mathbb{F}$, where $\dim_\mathbb{E}Q=2$, i.e., $Q$ is any of the algebras listed in Proposition~\ref{2_dim}. Then, the subalgebra $\{x\in Q\mid (x^q-x)^2=0\}$ is either $\mathbb{F}$, $\mathbb{F}\oplus\mathbb{F}$ or $\mathbb{F}\cdot1+\mathbb{E}v$, where $v^2=0$ and $\mathbb{F}\subseteq\mathbb{E}$. The first two satisfy $x^q-x$. Since $\mathcal{A}$ does not satisfy $x^q-x$, then $\mathcal{A}$ is a unital subalgebra of $\mathbb{F}\cdot1+\mathbb{E}v$, say, $\mathcal{A}=\mathbb{F}\cdot1+\mathcal{W}$, where $\mathcal{W}\subseteq\mathbb{E}v$. The identity $t(x)^q-t(x)$ implies that the image of the trace is in $\mathbb{F}\cdot1$. If $w\in\mathrm{Ker}\,t|_{\mathcal{W}}$, then $\mathbb{F}w$ is a nilpotent tr-ideal, contradicting the tr-primality of $\mathcal{A}$. Therefore, $\mathrm{Ker}\,t|_{\mathcal{W}}=0$, and, in particular, $\dim_\mathbb{F}\mathcal{W}=1$. Hence, $\mathcal{A}\cong\mathbb{F}\cdot1+\mathbb{F}v$.
\end{proof}

\section{Polynomial identities of prime tr-algebras}
In this section, we shall assume that $\mathbb{F}$ is a finite field with $q$ elements. The next result is crucial and will be heavily used in the sequel.

\begin{proposition}\label{prop}
Let $\mathcal{A}$ be a finite-dimensional centrally closed prime $\Omega$-algebra over a finite field $\mathbb{F}$, and $S\subseteq\mathrm{Id}(\mathcal{A})$. Then, $\mathrm{Id}(\mathcal{A})=\langle S\rangle$ if and only if
\begin{enumerate}
\item The relatively free algebra $\mathbb{F}\langle X,t\rangle/\langle S\rangle$ is semiprime;
\item For each prime $\Omega$-algebra $\mathcal{B}$, with $S\subseteq\mathrm{Id}(\mathcal{B})$, there exists a subalgebra $\mathcal{S}\subseteq\mathcal{A}$ and a surjective algebra homomorphism $\mathcal{S}\to\mathcal{B}$.
\end{enumerate}
\end{proposition}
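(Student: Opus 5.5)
We prove the two directions separately. The converse direction carries the content. The key point throughout is that $\mathrm{Id}(\mathcal{B})$ shrinks when $\mathcal{B}$ is replaced by a larger algebra: identities of an algebra pass to its subalgebras and to their homomorphic images.

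\emph{Necessity.} Assume $\mathrm{Id}(\mathcal{A})=\langle S\rangle$.

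For (ii), let $\mathcal{B}$ be prime with $S\subseteq\mathrm{Id}(\mathcal{B})$. Then $\mathrm{Id}(\mathcal{A})=\langle S\rangle\subseteq\mathrm{Id}(\mathcal{B})$, so $\mathcal{B}\in\mathrm{var}(\mathcal{A})$. Since $\mathcal{A}$ is finite, the variety $\mathrm{var}(\mathcal{A})$ is locally finite, so each finitely generated subalgebra of $\mathcal{B}$ is finite. If $\mathcal{B}$ itself is finite, Lemma~\ref{section} gives (ii) directly. In general we reduce to that case. I would use that in a locally finite variety a prime algebra is a directed union of finitely generated subalgebras whose sizes are bounded, since they are sections of $\mathcal{A}$. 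Hence $\mathcal{B}$ is finite, and Lemma~\ref{section} applies. Here I accept as given that the relevant statement of \cite{BDDF} covers prime $\Omega$-algebras in the variety generated by a finite algebra.

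For (i), the relatively free algebra $\mathbb{F}\langle X,t\rangle/\mathrm{Id}(\mathcal{A})$ embeds into the direct product $\prod_{\varphi}\mathcal{A}$, indexed by all evaluations $\varphi$. A direct product of copies of the prime algebra $\mathcal{A}$ is semiprime. Moreover, any ideal $I$ of the relatively free algebra with $I^2=0$ projects in each coordinate to an ideal of $\mathcal{A}$ with zero square. Since the variables can be sent onto generators of $\mathcal{A}$, that projection is an ideal of the image, which is all of $\mathcal{A}$. Therefore each projection vanishes, and so $I=0$.

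\emph{Sufficiency.} Assume (i) and (ii). Set $R=\mathbb{F}\langle X,t\rangle/\langle S\rangle$; clearly $\langle S\rangle\subseteq\mathrm{Id}(\mathcal{A})$. Let $f\notin\langle S\rangle$. We must find an evaluation in $\mathcal{A}$ on which $f$ does not vanish.

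Since $R$ is semiprime, the standard Andrunakievich/Levitzki-type argument for $\Omega$-algebras writes $R$ as a subdirect product of prime algebras $R/P$, with $P$ running over the prime ideals of $R$. Hence there is a prime ideal $P$ with $\bar f\notin P$. This is the step I expect to be the main obstacle. The intersection of the prime ideals of an $\Omega$-algebra coincides with the prime radical, and for the variety of trace algebras one must check that semiprime means prime radical zero. I would run the usual $m$-system argument, with ideals replaced by $\Omega$-ideals, as in \cite{Razmyslov}.

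Now $\mathcal{B}=R/P$ is prime and satisfies $S$. By (ii) there are a subalgebra $\mathcal{S}\subseteq\mathcal{A}$ and a surjection $\mathcal{S}\to\mathcal{B}$, so $\mathrm{Id}(\mathcal{A})\subseteq\mathrm{Id}(\mathcal{S})\subseteq\mathrm{Id}(\mathcal{B})$. But $f$ evaluated at the images of the free generators of $R/P$ gives $\bar f\ne0$, so $f\notin\mathrm{Id}(\mathcal{B})$. Consequently $f\notin\mathrm{Id}(\mathcal{A})$, and therefore $\mathrm{Id}(\mathcal{A})=\langle S\rangle$.
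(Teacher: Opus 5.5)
Your overall structure is the same as the paper's. For sufficiency, you write $\mathbb{F}\langle X,t\rangle/\langle S\rangle$ as a subdirect product of prime algebras and use (ii) to see that each factor satisfies $\mathrm{Id}(\mathcal{A})$. For necessity, you use semiprimeness of the relatively free algebra of $\mathcal{A}$ together with Lemma~\ref{section}.

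The step you single out as the main obstacle is one the paper simply asserts, and it is harmless. Given $0\neq a_0\in R$, choose inductively $0\neq a_{i+1}$ in the square of the (tr-)ideal generated by $a_i$; semiprimeness makes this possible. A (tr-)ideal maximal among those avoiding every $a_i$ is then prime, and the argument never uses associativity.

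\textbf{The gap: finiteness of $\mathcal{B}$ in the necessity of (ii).} You claim the finitely generated subalgebras of $\mathcal{B}$ have bounded size ``since they are sections of $\mathcal{A}$''. This is circular.
\begin{itemize}
\item Lemma~\ref{section} applies only to \emph{finite tr-prime} algebras. A finitely generated subalgebra of a prime algebra need not be prime, so nothing lets you call it a section of $\mathcal{A}$.
\item Local finiteness of $\mathrm{var}(\mathcal{A})$ gives no uniform bound either, since the relatively free algebras of rank $n$ grow with $n$.
\item A uniform bound on these subalgebras is essentially the finiteness of $\mathcal{B}$ you are trying to prove.
\end{itemize}
The paper does not prove finiteness at all: it applies Lemma~\ref{section} directly, tacitly treating $\mathcal{B}$ as finite. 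This is harmless in the paper's applications, which use only sufficiency; there the relevant prime algebras are finite because the chosen identities (such as $x^q-x$) force it, via Lemmas~\ref{Cap_3}--\ref{Cap_3_weak}. So you lose nothing relative to the paper, but your sentence is not a proof. An honest argument would start from Lemma~\ref{lemRaz3}. $\mathcal{B}$ satisfies all Capelli polynomials of order $\dim_{\mathbb{F}}\mathcal{A}+1$, so $\dim_{C(\mathcal{B})}Q(\mathcal{B})\le\dim_{\mathbb{F}}\mathcal{A}$. It would then remain to show, by a separate argument, that the field $C(\mathcal{B})$ is finite.

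\textbf{A smaller slip in (i).} You may only use evaluations $\varphi$ that are onto $\mathcal{A}$. For any other $\varphi$, $\pi_\varphi(I)$ is a square-zero ideal of a proper subalgebra, and primeness of $\mathcal{A}$ says nothing about it. The fix is easy: $X$ is infinite and each $f$ involves only finitely many variables, so surjective evaluations already separate the points of $\mathbb{F}\langle X,t\rangle/\mathrm{Id}(\mathcal{A})$. Take the embedding into the product over surjective evaluations only, and your argument then goes through.
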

\begin{proof}
Assume that (i) and (ii) hold. Then, $\mathbb{F}\langle X,t\rangle/\langle S\rangle$ is a subdirect product of prime algebras; each satisfying all the polynomial identities of $\mathcal{A}$, by (ii). Hence, $\mathrm{Id}(\mathcal{A})\subseteq\langle S\rangle$, and, therefore, equality holds.

Conversely, assume that $\langle S\rangle=\mathrm{Id}(\mathcal{A})$. Then, it is well known that $\mathbb{F}\langle X,t\rangle/\mathrm{Id}(\mathcal{A})$ is semiprime. In addition, (ii) follows from Lemma~\ref{section}.
\end{proof}

It will be important for our purposes to classify all Capelli polynomials of a given order. When the relatively free algebra is commutative, we have an elementary description. We omit the following simple and tedious proof.
\begin{lemma}
Let $f$ be a polynomial that is Capelli of order $m$. Then, $f$ is a consequence of $[x,y]$ and 
\begin{equation}\label{alt}
\sum_{\sigma\in\mathcal{S}_{m}}(-1)^\sigma t(y_1x_{\sigma(1)})\cdots t(y_{m-1}x_{\sigma(m-1)})y_{m}x_{\sigma(m)}.
\end{equation}
\end{lemma}

Now, we compute the ideal of trace polynomial identities. This section is divided as follows. First, we deal with the algebra $(D_2,t_{(\alpha,\beta)})$ when $\alpha\ne\beta$ (4.1) and when $\alpha=\beta\ne0$ (4.2). In (4.3) we study the quadratic extension of the base field. Finally, we compute a basis of the trace polynomial identities of $(\mathbb{F}\cdot1+\mathbb{F}v,\lambda)$ in (4.4).

\subsection{} Let $\alpha$, $\beta\in\mathbb{F}$, with $\alpha\ne\beta$ and $\beta\ne0$, and consider the algebra $(D_2,t_{(\alpha,\beta)})$ over $\mathbb{F}$. We define the polynomials $r$ and $s$ via:
\begin{equation}\label{eq_rs}
\begin{split}
r(x_1,x_2)=t(x_1)t(x_2)-\beta t(x_1x_2)-\alpha t(x_1)x_2-\alpha t(x_2)x_1+\alpha(\beta+\alpha)x_1x_2,\\
s(x_1,x_2)=t(x_1)t(x_2)-\alpha t(x_1x_2)-\beta t(x_1)x_2-\beta t(x_2)x_1+\beta(\beta+\alpha)x_1x_2.
\end{split}
\end{equation}
Note that the image of $r$ evaluated on $(D_2,t_{(\alpha,\beta)})$ is contained in $\{0\}\oplus\mathbb{F}$, while the image of $s$ is contained in $\mathbb{F}\oplus\{0\}$. Hence,
\[r(x_1,x_2)s(x_3,x_4)
\in\mathrm{Id}(D_2,t_{(\alpha,\beta)}).
\]

Given $w_k=a_ke_1+b_ke_2\in\mathcal{A}(\mu_1,\mu_2,\nu_1,\nu_2)$, we have:
\begin{align*}
r(w_1,w_2)=(&a_1a_2\mu_1^2+(a_1b_2+b_1a_2)\mu_1\mu_2+b_1b_2\mu_2^2+a_1a_2(-\beta-2\alpha)\mu_1+\\&+(-\beta b_1b_2-\alpha b_1a_2-\alpha a_1b_2)\mu_2+\alpha(\beta+\alpha)a_1a_2)e_1+\\%
&(a_1a_2\nu_1^2+(a_1b_2+b_1a_2)\nu_1\nu_2+b_1b_2\nu_2^2+b_1b_2(-\beta-2\alpha)\nu_2+\\&+(-\beta a_1a_2-\alpha a_1b_2-\alpha b_1a_2)\nu_1+\alpha(\beta+\alpha)b_1b_2)e_2.
\end{align*}
The computation of $s(w_1,w_2)$ is analogous, as it is obtained by simply interchanging $\alpha$ and $\beta$.

We record some evaluations of the polynomials $r$ and $s$ on the algebra $\mathcal{A}(\mu_1,\mu_2,\nu_1,\nu_2)$:
\begin{equation}\label{ev_r}
\begin{aligned}
&r(e_1,e_1)=(\mu_1^2+(-\beta-2\alpha)\mu_1+\alpha(\beta+\alpha))e_1+(\nu_1^2-\beta\nu_1)e_2;\\%
&r(e_1,e_2)=(\mu_1\mu_2-\alpha\mu_2)e_1+(\nu_1\nu_2-\alpha\nu_1)e_2;\\%
&r(e_2,e_2)=(\mu_2^2-\beta\mu_2)e_1+(\nu_2^2+(-\beta-2\alpha)\nu_2+\alpha(\beta+\alpha))e_2,
\end{aligned}
\end{equation}
and, for $s$, we obtain:
\begin{equation}\label{ev_s}
\begin{aligned}
&s(e_1,e_1)=(\mu_1^2+(-\alpha-2\beta)\mu_1+\beta(\alpha+\beta))e_1+(\nu_1^2-\alpha\nu_1)e_2;\\%
&s(e_1,e_2)=(\mu_1\mu_2-\beta\mu_2)e_1+(\nu_1\nu_2-\beta\nu_1)e_2;\\%
&s(e_2,e_2)=(\mu_2^2-\alpha\mu_2)e_1+(\nu_2^2+(-\alpha-2\beta)\nu_2+\beta(\alpha+\beta))e_2.
\end{aligned}
\end{equation}

\begin{lemma}\label{lem_main}
If $\mathcal{A}=\mathcal{A}(\mu_1,\mu_2,\nu_1,\nu_2)$ is tr-prime and satisfies $t(1)-(\alpha+\beta)1$ and $r(x_1,x_2)s(x_3,x_4)$,
 then $\mathcal{A}(\mu_1,\mu_2,\nu_1,\nu_2)\cong(D_2,t_{(\alpha,\beta)})$.
\end{lemma}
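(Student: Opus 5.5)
By Lemma~\ref{rem}, tr-primality of $\mathcal{A}(\mu_1,\mu_2,\nu_1,\nu_2)$ forces $\mu_1=\nu_1=:a$, $\mu_2=\nu_2=:b$ with $(a,b)\ne(0,0)$, so $\mathcal{A}\cong(D_2,t_{(a,b)})$. The plan is therefore to show that $\{a,b\}=\{\alpha,\beta\}$ as multisets. Then the remark that $(D_2,t_{(\alpha_1,\alpha_2)})$ is invariant under permuting the $\alpha_i$ gives $\mathcal{A}\cong(D_2,t_{(\alpha,\beta)})$.

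\emph{Step 1 (the sum).} Since $t(1)=t(e_1+e_2)=(a+b)1$, the identity $t(1)-(\alpha+\beta)1$ yields $a+b=\alpha+\beta$.

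\emph{Step 2 (evaluate $r$ and $s$).} Substitute $\mu_1=\nu_1=a$, $\mu_2=\nu_2=b$ into \eqref{ev_r} and \eqref{ev_s}, and use $a+b=\alpha+\beta$ to simplify. The $e_1$-coefficient of $r(e_1,e_1)$ becomes
\[
a^2-(a+b+\alpha)a+\alpha(a+b)=b(\alpha-a).
\]
Its $e_2$-coefficient is $a(a-\beta)$. Interchanging $\alpha$ and $\beta$, we get
\[
s(e_1,e_1)=b(\beta-a)e_1+a(a-\alpha)e_2 .
\]

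\emph{Step 3 (the product identity).} Since multiplication in $D_2$ is componentwise, the identity $r(x_1,x_2)s(x_3,x_4)$ evaluated at $x_i=e_1$ gives
\[
0=r(e_1,e_1)s(e_1,e_1)=b^2(\alpha-a)(\beta-a)e_1+a^2(a-\beta)(a-\alpha)e_2 .
\]
Since $(a,b)\ne(0,0)$, one of $a$, $b$ is nonzero.
\begin{itemize}
\item If $b\neq0$, the $e_1$-component forces $a\in\{\alpha,\beta\}$.
\item If $b=0$, then $a\ne0$, and the $e_2$-component forces $a\in\{\alpha,\beta\}$.
\end{itemize}
In either case $a\in\{\alpha,\beta\}$, and then $b=\alpha+\beta-a$ is the other value. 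This gives $(a,b)=(\alpha,\beta)$ or $(a,b)=(\beta,\alpha)$, and the conclusion follows.

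The only delicate point is choosing the evaluation so that the product is nonzero whenever $a\notin\{\alpha,\beta\}$. The hypothesis $a+b=\alpha+\beta$ from Step 1 is what makes the coefficients factor cleanly, so Step 2 must be done after Step 1. The hypotheses $\alpha\ne\beta$ and $\beta\ne0$ are not needed in this argument.
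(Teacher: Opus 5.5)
Your proof is correct and takes the same route as the paper: you reduce to $\mu_1=\nu_1$, $\mu_2=\nu_2$ via Lemma~\ref{rem}, get $\mu_1+\mu_2=\alpha+\beta$ from $t(1)-(\alpha+\beta)1$, and evaluate $r\cdot s$ on basis vectors to force one coefficient into $\{\alpha,\beta\}$. The only difference is the test point: the paper uses $r(e_1,e_2)s(e_1,e_2)$, whose $e_1$-component $\mu_2^2(\mu_1-\alpha)(\mu_1-\beta)$ factors without the sum relation (after swapping indices so that $\mu_2\ne0$), whereas your choice $(e_1,e_1)$ needs Step~1 to factor and you handle $b=0$ as a separate case.
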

\begin{proof}
From Lemma~\ref{rem}, we may assume that $\mu_2=\nu_2\ne0$. In addition, the identity $t(1)-(\alpha+\beta)1$ implies $\mu_1+\mu_2=\alpha+\beta$. Since $r(e_1,e_2)s(e_1,e_2)=0$, we get $\mu_2^2(\mu_1-\alpha)(\mu_1-\beta)=0$. Therefore, after interchanging the entries, either $\mu_1=\alpha$ or $\mu_1=\beta$. In any case, we obtain $\mathcal{A}\cong(D_2,t_{(\alpha,\beta)})$.
\end{proof}

Finally, we obtain a description of the trace polynomial identities of the algebra $(D_2,t_{(\alpha,\beta)})$, when the trace is non-degenerate and the base field is finite.
\begin{thm}\label{thm_alphabeta}
Let $\mathbb{F}$ be a finite field with $q$ elements, and let $\alpha$, $\beta\in\mathbb{F}$, with $\alpha\ne\beta$ and $\alpha,\beta\ne0$. Then, $\mathrm{Id}(D_2,t_{(\alpha,\beta)})$ follows from:
\begin{enumerate}
\renewcommand{\labelenumi}{(\roman{enumi})}
\item $r(x_1,x_2)s(x_3,x_4)$ (defined in \eqref{eq_rs}),
\item $t(1)-(\alpha+\beta)1$,
\item $x^q-x$, and
\item all Capelli polynomials of order $3$ (i.e., the polynomial \eqref{alt} with $m=3$).
\end{enumerate}
\end{thm}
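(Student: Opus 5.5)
The plan is to apply Proposition~\ref{prop} with $\mathcal{A}=(D_2,t_{(\alpha,\beta)})$ and $S$ the set of polynomials (i)--(iv). First I check that $S\subseteq\mathrm{Id}(D_2,t_{(\alpha,\beta)})$. The product $r(x_1,x_2)s(x_3,x_4)$ was already observed to be an identity, since the images of $r$ and $s$ lie in complementary coordinates. The identity $t(1)=(\alpha+\beta)1$ is immediate. The identity $x^q-x$ holds because $D_2=\mathbb{F}^2$. All Capelli polynomials of order $3$ vanish because $\dim D_2=2$ and these polynomials are multilinear and alternating.

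I also need $(D_2,t_{(\alpha,\beta)})$ to be centrally closed and tr-prime. Since $\alpha,\beta\neq0$, it is tr-simple by Lemma~\ref{unnamed}. The Capelli polynomial $t(x_3x_1)x_2-t(x_3x_2)x_1$ does not vanish on it (evaluate at $e_1,e_2,e_1$). Lemma~\ref{lemRaz3} then gives $\dim_{C}Q=2$, so $C=\mathbb{F}$ and $Q=D_2$.

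Next comes condition (ii) of Proposition~\ref{prop}. Let $\mathcal{B}$ be a tr-prime algebra satisfying $S$. Since $\mathcal{B}$ satisfies $x^q-x$ and all Capelli polynomials of order $3$, Lemma~\ref{Cap_3} shows that $\mathcal{B}$ is isomorphic to an $\mathbb{F}$-subalgebra of $D_2(\mathbb{F})$. Unitality comes from the $\Omega_0$ operation, so $\mathcal{B}$ is either $\mathbb{F}$ or $\mathbb{F}\oplus\mathbb{F}$ with some trace.

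In the case $\mathcal{B}=\mathbb{F}\oplus\mathbb{F}$, it is some $\mathcal{A}(\mu_1,\mu_2,\nu_1,\nu_2)$. Lemma~\ref{lem_main}, using (i) and (ii), gives $\mathcal{B}\cong(D_2,t_{(\alpha,\beta)})$.

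In the case $\mathcal{B}=\mathbb{F}$, the trace is $t(x)=\gamma x$, and (ii) forces $\gamma=\alpha+\beta$. I then show that $(\mathbb{F},t_{\alpha+\beta})$ is a section of $D_2$, namely the image of the scalar subalgebra $\mathbb{F}\cdot1\subseteq D_2$. Indeed $t(1)=(\alpha+\beta)1$ in $D_2$, so $\mathbb{F}1$ is a trace subalgebra isomorphic to $(\mathbb{F},t_{\alpha+\beta})$. Note that in this case (i) need not be checked, since the section exists directly.

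The main obstacle is condition (i) of Proposition~\ref{prop}: showing that $\mathbb{F}\langle X,t\rangle/\langle S\rangle$ is semiprime. My first approach uses the fact that any algebra satisfying $x^q-x$ has no nonzero nilpotent elements. If $a^2=0$, then $a=a^q=0$ for $q\ge2$, so the relatively free algebra is reduced. A reduced algebra has no nonzero ideal $I$ with $I^2=0$, since every element of such an ideal would be nilpotent. The same holds for tr-ideals, so the relatively free algebra is tr-semiprime. Semiprimeness in the $\Omega$-algebra sense (for tr-ideals) therefore follows at once.

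If Proposition~\ref{prop} instead requires a representation as a subdirect product of tr-prime algebras, I would argue that a tr-semiprime algebra is a subdirect product of tr-prime quotients, by the standard argument with tr-ideals maximal with respect to avoiding an $m$-system. The identities of $S$ pass to these quotients, and each quotient is covered by the previous paragraph. With both conditions of Proposition~\ref{prop} verified, this yields $\mathrm{Id}(D_2,t_{(\alpha,\beta)})=\langle S\rangle$.
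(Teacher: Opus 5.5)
Your proposal is correct and follows the paper's proof essentially step for step. You get semiprimeness of $\mathbb{F}\langle X,t\rangle/\langle S\rangle$ from $x^q-x$, use Lemma~\ref{Cap_3} to reduce to unital subalgebras of $\mathbb{F}\oplus\mathbb{F}$, settle the $1$-dimensional case with $t(1)-(\alpha+\beta)1$ and the $2$-dimensional case with Lemma~\ref{lem_main}, and then invoke Proposition~\ref{prop}. Your extra checks, that $S\subseteq\mathrm{Id}(D_2,t_{(\alpha,\beta)})$ and that $(D_2,t_{(\alpha,\beta)})$ is tr-prime and centrally closed, are hypotheses of Proposition~\ref{prop} that the paper leaves implicit; note that it is tr-simplicity, not $\dim_C Q=2$ alone, that gives $C=\mathbb{F}$.
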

\begin{proof}
Let $S$ be the set of all polynomial identities of the statement of the theorem. We shall apply Proposition~\ref{prop}. It is clear that any algebra satisfying $x^q-x$ is semiprime (indeed, it is semiprimitive). In particular, $\mathbb{F}\langle X,t\rangle/\langle S\rangle$ is semiprime.

Now, let $\mathcal{A}$ be a unital tr-prime algebra such that $S\subseteq\mathrm{Id}(\mathcal{A})$. By Lemma~\ref{Cap_3}, we obtain $\mathcal{A}\subseteq\mathbb{F}\oplus\mathbb{F}$. If $\dim_\mathbb{F}\mathcal{A}=1$, then the identity $t(1)-(\alpha+\beta)1$ implies that $\mathcal{A}$ is a subalgebra of $(D_2,t_{(\alpha,\beta)})$. If $\dim_\mathbb{F}\mathcal{A}=2$, then Lemma~\ref{lem_main} implies that $\mathcal{A}\cong(D_2,t_{(\alpha,\beta)})$. Therefore, we conclude from Proposition~\ref{prop} that $\mathrm{Id}(D_2,t_{(\alpha,\beta)})=\langle S\rangle$.
\end{proof}

\subsection{}
Let $\lambda\ne0$. We consider the algebra $(D_2,t_{(\lambda,\lambda)})$. The expression $y_2(x):=\lambda^{-1}t(x)-x$ is such that $y_2(a,b)=(b,a)$. Hence,
\begin{equation}\label{eq_lambda2}
h_2(x):=(\lambda^{-1}t(x))^2-\lambda^{-1}t(x^2)-2(\lambda^{-1}t(x)-x)x\in\mathrm{Id}(D_2,t_{(\lambda,\lambda)}).
\end{equation}
\begin{lemma}\label{prime_lambda2}
Let $\mathcal{A}=(D_2,t_{(\mu_1,\mu_2)})$ be a tr-prime algebra satisfying \eqref{eq_lambda2}. If $\mathrm{char}\,\mathbb{F}\ne2$, then $\mu_1=\mu_2=\lambda$. If $\mathrm{char}\,\mathbb{F}=2$ and $\mathcal{A}$ satisfies the identity $t(1)=0$, then $\mu_1=\mu_2=\lambda$.
%
%In other words, %\mathcal{A}\cong(D_2,t_{(\lambda,\lambda)})$.
\end{lemma}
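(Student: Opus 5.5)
The plan is to evaluate $h_2$ directly on the idempotents $e_1,e_2$ of $\mathcal{A}=(D_2,t_{(\mu_1,\mu_2)})$ and read off polynomial conditions on $\mu_1,\mu_2$. Here $t(a,b)=(\mu_1a+\mu_2b)(1,1)$, so $t(e_i)=\mu_i\cdot 1$. Since $e_i^2=e_i$, we also have $t(e_i^2)=\mu_i\cdot 1$. Write $u_i=\lambda^{-1}\mu_i$.

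\textbf{Computing $h_2(e_1)$.} Substituting $x=e_1$ gives
\[
h_2(e_1)=(u_1^2-u_1)\cdot 1-2(u_1-1)e_1 .
\]
Reading coordinates in $\mathbb{F}\oplus\mathbb{F}$:
\begin{itemize}
\item the $e_1$-coordinate is $u_1^2-3u_1+2=(u_1-1)(u_1-2)$;
\item the $e_2$-coordinate is $u_1(u_1-1)$.
\end{itemize}
By the symmetry $e_1\leftrightarrow e_2$, the same conditions hold for $u_2$. Since $h_2$ is an identity of $\mathcal{A}$, both coordinates must vanish.

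\textbf{Case $\mathrm{char}\,\mathbb{F}\ne 2$.} The $e_2$-coordinate forces $u_1\in\{0,1\}$. If $u_1=0$, the $e_1$-coordinate equals $2\neq 0$, a contradiction. Hence $u_1=1$, that is, $\mu_1=\lambda$. Likewise $\mu_2=\lambda$.

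\textbf{Case $\mathrm{char}\,\mathbb{F}=2$.} Both coordinates reduce to $u_i(u_i-1)$, so we only get $\mu_i\in\{0,\lambda\}$. This is where the extra hypothesis is needed. The identity $t(1)=0$ gives $\mu_1+\mu_2=0$, i.e.\ $\mu_1=\mu_2$ in characteristic $2$. Tr-primality excludes $(\mu_1,\mu_2)=(0,0)$ by Lemma~\ref{rem}, so $\mu_1=\mu_2=\lambda$.

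The only real point of care is the bookkeeping in expanding $h_2(e_1)$, in particular the scalar $1=e_1+e_2$ contributing to both coordinates. I expect no genuine obstacle beyond that.
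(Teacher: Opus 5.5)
Your proof is correct and follows essentially the paper's route: evaluate $h_2$ at $0/1$-vectors of $D_2$, extract quadratic constraints on $\lambda^{-1}\mu_i$, and in characteristic $2$ combine $t(1)=0$ with tr-primality. The only difference is bookkeeping. You read both coordinates of $h_2(e_1)$ and $h_2(e_2)$, whereas the paper first uses tr-primality to assume $\mu_2\neq0$ and then reads only the first coordinate at $e_2$, $1$ and $e_1$; your version shows that tr-primality is not needed at all when $\mathrm{char}\,\mathbb{F}\neq2$.
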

\begin{proof}
Given $(a,b)\in\mathcal{A}$, the condition $h_2(a,b)=0$ gives us
\[
\lambda^{-2}(\mu_1a+\mu_2b)^2-\lambda^{-1}(\mu_1a^2+\mu_2b^2)+2a^2-2\lambda^{-1}(\mu_1a+\mu_2b)a=0.
\]
Now, assuming that $a$, $b\in\{0,1\}$, we get $a^2=a$ and $b^2=b$. Thus, we obtain a quadratic equation in the unknown $\lambda^{-1}(\mu_1a+\mu_2b)$, where the solution is
\begin{equation}\label{aux2}
\lambda^{-1}(\mu_1a+\mu_2b)\in\{1,2a\}.
\end{equation}
Now, taking $a=0$ and $b=1$ in \eqref{aux2} gives $\lambda^{-1}\mu_2\in\{1,0\}$. Since $\mathcal{A}$ is tr-prime, we may assume that $\mu_2\ne0$. Thus, $\mu_2=\lambda$. Now, evaluating $a=b=1$ in \eqref{aux2}, we get $\lambda^{-1}(\mu_1+\mu_2)\in\{1,2\}$; while $a=1$ and $b=0$ gives $\lambda^{-1}\mu_1\in\{1,2\}$.

Now, assume that $\mathrm{char}\,\mathbb{F}\ne2$. Since $\lambda^{-1}\mu_2=1$, we obtain $\lambda^{-1}\mu_1=1$ as well. Hence, $\mathcal{A}\cong(D_2,t_{(\lambda,\lambda)})$.

As a final step, assume that $\mathrm{char}\,\mathbb{F}=2$. Suppose that $\lambda^{-1}\mu_1=0$, i.e., $\mu_1=0$. Then $\mathcal{A}=(D_2,t_{(0,\lambda)})$. However, $t(1_\mathcal{A})=\lambda\ne0$, so it does not satisfy $t(1)=0$. Hence, once again, we obtain $\lambda^{-1}\mu_1=1$.
\end{proof}

As a consequence, we obtain the following result:
\begin{thm}\label{thm_lambda}
Let $\mathbb{F}$ be a finite field with $q$ elements, and let $\lambda\in\mathbb{F}\setminus\{0\}$. Then $\mathrm{Id}(D_2,t_{(\lambda,\lambda)})$ follows from:
\begin{enumerate}
\renewcommand{\labelenumi}{(\roman{enumi})}
\item $(\lambda^{-1}t(x))^2-\lambda^{-1}t(x^2)-2(\lambda^{-1}t(x)-x)x$,
%\item $t(1)-2\lambda$,
\item $x^q-x$, 
\item all Capelli polynomials of order $3$ (i.e., the polynomial \eqref{alt} with $m=3$),
\item $t(1)-2\lambda$.
\end{enumerate}
%In addition, if $\mathrm{char}\,\mathbb{F}=2$, then $\mathrm{Id}(D_2,t_{(\lambda,\lambda)})$ follows from all the above identities together with the identity $t(1)=0$.
\end{thm}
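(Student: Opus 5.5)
We follow the same scheme as the proof of Theorem~\ref{thm_alphabeta}, applying Proposition~\ref{prop} with $S$ the set of identities (i)--(iv). First, each of (i)--(iv) is an identity of $(D_2,t_{(\lambda,\lambda)})$:
\begin{itemize}
\item (i) was checked in \eqref{eq_lambda2}, using that $\lambda^{-1}t(x)-x$ acts as the swap $(a,b)\mapsto(b,a)$;
\item (ii) holds because $D_2$ is a product of copies of $\mathbb{F}$;
\item (iii) holds because $\dim D_2=2$, via Lemma~\ref{lemRaz3};
\item (iv) holds because $t(1)=\lambda+\lambda=2\lambda$.
\end{itemize}
Condition (i) of Proposition~\ref{prop} is immediate: since $S$ contains $x^q-x$, the relatively free algebra $\mathbb{F}\langle X,t\rangle/\langle S\rangle$ is semiprimitive, and hence semiprime.

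For condition (ii), let $\mathcal{A}$ be a unital tr-prime algebra satisfying $S$. Lemma~\ref{Cap_3} (using $x^q-x$ and the Capelli polynomials of order $3$) gives that $\mathcal{A}$ is isomorphic to an $\mathbb{F}$-subalgebra of $D_2(\mathbb{F})$. So either $\dim_\mathbb{F}\mathcal{A}=1$ or $\mathcal{A}=\mathbb{F}\oplus\mathbb{F}$ as an algebra.

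\emph{Case $\dim_\mathbb{F}\mathcal{A}=1$.} Here $\mathcal{A}=\mathbb{F}\cdot1$, and $t(1)=2\lambda$ by (iv). We must show that this is a section of $(D_2,t_{(\lambda,\lambda)})$. Take the scalar subalgebra $\mathbb{F}(1,1)\subseteq D_2$. Its trace is $t(1,1)=2\lambda(1,1)$, so the identity map $\mathbb{F}(1,1)\to\mathcal{A}$ preserves the trace and is the required homomorphism.

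\emph{Case $\mathcal{A}=\mathbb{F}\oplus\mathbb{F}$.} Lemma~\ref{rem} shows that tr-primality forces $\mathcal{A}\cong(D_2,t_{(\mu_1,\mu_2)})$ with $(\mu_1,\mu_2)\ne(0,0)$. We then apply Lemma~\ref{prime_lambda2}:
\begin{itemize}
\item If $\mathrm{char}\,\mathbb{F}\ne2$, identity (i) alone gives $\mu_1=\mu_2=\lambda$.
\item If $\mathrm{char}\,\mathbb{F}=2$, identity (iv) reads $t(1)=2\lambda=0$, which is exactly the extra hypothesis of Lemma~\ref{prime_lambda2}, so again $\mu_1=\mu_2=\lambda$.
\end{itemize}
Hence $\mathcal{A}\cong(D_2,t_{(\lambda,\lambda)})$. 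In both cases condition (ii) of Proposition~\ref{prop} holds, and therefore $\mathrm{Id}(D_2,t_{(\lambda,\lambda)})=\langle S\rangle$.

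The main obstacle is the characteristic-$2$ case, where $h_2$ alone does not exclude $(D_2,t_{(0,\lambda)})$; there the identity $t(1)-2\lambda$ is essential. I would also re-check that the proof of Lemma~\ref{prime_lambda2} only uses evaluations $a,b\in\{0,1\}$, so it applies to any tr-prime $\mathcal{A}$ of the above form.

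Finally, Proposition~\ref{prop} is stated for centrally closed algebras. So I would verify that $(D_2,t_{(\lambda,\lambda)})$ is centrally closed with $C=\mathbb{F}$: since the trace is nondegenerate, it is tr-simple by Lemma~\ref{unnamed}, and its multiplication algebra together with $t$ is all of $\mathrm{End}_\mathbb{F}(D_2)$. If needed, one can instead use the direct argument from the proof of Proposition~\ref{prop}: $\mathbb{F}\langle X,t\rangle/\langle S\rangle$ is a subdirect product of tr-prime algebras, each a section of $(D_2,t_{(\lambda,\lambda)})$.
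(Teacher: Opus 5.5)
Your proposal is correct and follows the paper's own proof: both apply Proposition~\ref{prop}, obtain semiprimeness from $x^q-x$, and reduce via Lemma~\ref{Cap_3} to subalgebras of $D_2$. Both then settle the $1$-dimensional case with $t(1)-2\lambda$ and the $2$-dimensional case with Lemma~\ref{prime_lambda2}, which replaces Lemma~\ref{lem_main}. Your explicit appeal to Lemma~\ref{rem}, and your note that in characteristic $2$ the identity $t(1)-2\lambda$ supplies the hypothesis $t(1)=0$, spell out steps the paper leaves implicit.
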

\begin{proof}
The argument is the same as in the proof of Theorem~\ref{thm_alphabeta}, except that Lemma~\ref{lem_main} is replaced by Lemma~\ref{prime_lambda2}.
\end{proof}

\subsection{} Let $\mathbb{F}$ be a finite field with $q$ elements, let $\mathbb{E}$ be a quadratic extension of $\mathbb{F}$, and consider $\mathbb{E}$ as an $\mathbb{F}$-algebra. Denote by $\varphi:\mathbb{E}\to\mathbb{E}$ the map defined by $\varphi(a)=a^q$. Recall that $\mathrm{Aut}(\mathbb{E}/\mathbb{F})=\langle\varphi\rangle$.

\begin{thm}\label{thmA}
Consider the $\mathbb{F}$-algebra $(\mathbb{E},t_\alpha)$, where $\mathbb{E}/\mathbb{F}$ is a quadratic extension, $\alpha\in\mathbb{E}$ and $t_\alpha(x)=\alpha x$, and denote $q=|\mathbb{F}|$. Then, $\mathrm{Id}(\mathbb{E},t_\alpha)$ follows from:
\begin{enumerate}
\renewcommand{\labelenumi}{(\roman{enumi})}
\item $m_\alpha(t(1))$, where $m_\alpha(x)\in\mathbb{F}[x]$ is the minimal polynomial of $\alpha$ over $\mathbb{F}$,
\item $t(x)-t(1)x$,
\item $x^{q^2}-x$, and
\item all Capelli polynomials of order $2$ (i.e., the polynomial \eqref{alt} with $m=2$).
\end{enumerate}
\end{thm}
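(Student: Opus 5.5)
We follow the same scheme as in Theorem~\ref{thm_alphabeta} and apply Proposition~\ref{prop}, with $S$ the set of identities (i)--(iv). First we check that $S\subseteq\mathrm{Id}(\mathbb{E},t_\alpha)$. Indeed, $t(1)=\alpha$, so $m_\alpha(t(1))=m_\alpha(\alpha)=0$. Next, $t(x)=\alpha x=t(1)x$. Also $a^{q^2}=a$ for every $a\in\mathbb{E}$. Finally, by (ii) every Capelli polynomial of order $2$ reduces, modulo commutativity, to an $\mathbb{E}$-multilinear alternating expression in two variables, and this vanishes because $\dim_{\mathbb{E}}\mathbb{E}=1$. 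Equivalently, Lemma~\ref{lemRaz3} applies, since $\dim_{C}Q=1$ by the earlier lemma. Semiprimeness of $\mathbb{F}\langle X,t\rangle/\langle S\rangle$ follows as before: $x^{q^2}-x$ forces every element to satisfy $a^{q^2}=a$, so there are no nonzero nilpotent elements.

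For condition (ii) of Proposition~\ref{prop}, let $\mathcal{B}$ be a tr-prime algebra satisfying $S$. Because $\mathcal{B}$ satisfies all Capelli polynomials of order $2$ (hence also of order $3$), Lemma~\ref{lemRaz3} gives $\dim_{C(\mathcal{B})}Q(\mathcal{B})\le1$, so $Q(\mathcal{B})=C(\mathcal{B})$ is a field. The identity $x^{q^2}-x$ then shows that $\mathcal{B}$ embeds in $\{x\in C(\mathcal{B}) : x^{q^2}=x\}$, a copy of $\mathbb{E}$ (or of a subfield of it). Since $\mathcal{B}$ is unital and contains $\mathbb{F}$, either $\mathcal{B}=\mathbb{F}$ or $\mathcal{B}\cong\mathbb{E}$ as $\mathbb{F}$-algebras. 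This is essentially Lemma~\ref{Cap_3v} applied with order~$2$. In either case identity (ii) forces $t(x)=\gamma x$ with $\gamma=t(1)\in\mathcal{B}$, and identity (i) says $m_\alpha(\gamma)=0$.

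Now we match traces. If $\mathcal{B}\cong\mathbb{E}$, then $\gamma$ is a root of $m_\alpha$ in $\mathbb{E}$, so $\gamma\in\{\alpha,\alpha^q\}=\{\alpha,\varphi(\alpha)\}$. By the Remark following Lemma~\ref{tr_simple}, $(\mathcal{B},t)\cong(\mathbb{E},t_\alpha)$. If $\mathcal{B}=\mathbb{F}$, then $\gamma\in\mathbb{F}$ is a root of $m_\alpha$. This forces $\alpha\in\mathbb{F}$ and $m_\alpha(x)=x-\alpha$, so $\gamma=\alpha$, and $\mathcal{B}$ is the trace subalgebra $\mathbb{F}\cdot1$ of $(\mathbb{E},t_\alpha)$, which is closed under $t_\alpha$. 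In both cases $\mathcal{B}$ is a section of $(\mathbb{E},t_\alpha)$, and Proposition~\ref{prop} yields $\mathrm{Id}(\mathbb{E},t_\alpha)=\langle S\rangle$.

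The main obstacle is the reduction of $\mathcal{B}$ to a subfield of $\mathbb{E}$, and it has two delicate points. The first is correctly using the order-$2$ Capelli identities together with Lemma~\ref{lemRaz3} to conclude that the central closure is one-dimensional. The second is identifying the fixed ring of $x\mapsto x^{q^2}$ inside the field $C(\mathcal{B})$, which may be infinite, while making sure the $\mathbb{F}$-structure and the trace are respected. We would handle this exactly as in the proofs of Lemmas~\ref{Cap_3} and~\ref{Cap_3v}.
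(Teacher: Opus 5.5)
Your proposal is correct and follows the paper's proof essentially step by step: you apply Proposition~\ref{prop}, get semiprimeness from $x^{q^2}-x$, use the rank theorem with the order-$2$ Capelli polynomials to get $Q(\mathcal{B})=C(\mathcal{B})$, use the fixed ring of $x\mapsto x^{q^2}$ to get $\mathcal{B}\in\{\mathbb{F},\mathbb{E}\}$, and then use identities (i)--(ii) and the Frobenius isomorphism to identify the trace. In the case $\mathcal{B}=\mathbb{F}$, your argument that irreducibility of $m_\alpha$ forces $\alpha\in\mathbb{F}$ and $t(1)=\alpha$ is slightly more careful than the paper, which invokes only identity (ii) at that point.
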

\begin{proof}
Let $S$ be the set of polynomials of the statement. The polynomial identity $x^{q^2}-x$ implies that $\mathbb{F}\langle X,t\rangle/\langle S\rangle$ is semiprime. Let $(\mathcal{B},t)$ be a tr-prime algebra such that $S\subseteq\mathrm{Id}(\mathcal{B})$. Since $\mathcal{B}$ satisfies all Capelli polynomials of order $2$, it follows from Razmyslov Rank Theorem (Lemma~\ref{lemRaz3}) that $\dim_{C(\mathcal{B})}Q(\mathcal{B})=1$; i.e., $Q(\mathcal{B})=C(\mathcal{B})$ is a field extension of $\mathbb{F}$. Since $\mathcal{B}$ satisfies $x^{q^2}-x$, we see that, as ordinary algebras, $\mathcal{B}\subseteq\{x\in C(\mathcal{B})\mid x^{q^2}=x\}=\mathbb{E}$. If $\mathcal{B}=\mathbb{F}$, then the identity $t(x)-t(1)x$ implies that $\mathcal{B}$ is a tr-subalgebra of $(\mathbb{E},t_\alpha)$.

So, assume that $\mathcal{B}$ is a quadratic extension of $\mathbb{F}$. The first identity implies that $t(1_\mathcal{B})\in\{\alpha,\alpha^q\}$. In addition, the second identity implies that $(\mathcal{B},t)=(\mathbb{E},t_{\alpha})$ or $(\mathcal{B},t)=(\mathbb{E},t_{\alpha^q})$. Since both algebras are isomorphic, the result follows from Proposition~\ref{prop}.
\end{proof}

Now, we assume that we have a trace map of the kind $\mathbb{E}\to\mathbb{F}$. Recall that one has $\mathrm{Tr}_c(x)=\mathrm{Tr}_{\mathbb{E}/\mathbb{F}}(cx)$, for all $x\in\mathbb{E}$, for some $c\in\mathbb{E}\setminus\{0\}$. Recall that $(\mathbb{E},\mathrm{Tr}_c)\cong(\mathbb{E},\mathrm{Tr}_{c'})$ if and only if $c'\in\{c,c^q\}$.

\begin{thm}\label{thmB}
Let $\mathbb{E}/\mathbb{F}$ be a quadratic extension, where
$|\mathbb{F}|=q$, and let $\mathrm{Tr}_c:\mathbb{E}\longrightarrow\mathbb{F}$ be defined by $\mathrm{Tr}_c(x)=\operatorname{Tr}_{\mathbb{E}/\mathbb{F}}(cx)$ for every $x\in\mathbb{E}$, where $c\in\mathbb{E}\setminus\{0\}$. Put $
s=\operatorname{Tr}_{\mathbb{E}/\mathbb{F}}(c),
\ n=N_{\mathbb{E}/\mathbb{F}}(c)$. Then $\operatorname{Id}(\mathbb{E},\mathrm{Tr}_c)$ follows from:
\begin{enumerate}
\renewcommand{\labelenumi}{(\roman{enumi})}
\item $t(x)t(y)-s\,t(xy)+n(x-x^q)(y-y^q)$;
\item $t(1)-s$;
\item $t(x)^q-t(x)$;
\item $t(x)(1-x^{q^2-1})$;
\item $x^{q^2}-x$; and
\item all Capelli polynomials of order $3$ (i.e., the polynomial \eqref{alt} with $m=3$).
\end{enumerate}
\end{thm}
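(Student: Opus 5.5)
The proof will follow the scheme of Theorems~\ref{thm_alphabeta} and \ref{thmA}, using Proposition~\ref{prop}. Let $S$ be the set of polynomials (i)--(vi).

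\emph{Step 1: $S\subseteq\mathrm{Id}(\mathbb{E},\mathrm{Tr}_c)$.} Identities (ii), (iii), (v) are immediate, and (vi) holds because $\dim_\mathbb{F}\mathbb{E}=2$. For (iv), $1-x^{q^2-1}$ vanishes on $\mathbb{E}^\times$, and for $x=0$ we have $t(0)=0$. For (i), write $x'=x^q$. Then
\[
t(x)t(y)=(cx+c'x')(cy+c'y'),\qquad s\,t(xy)=(c+c')(cxy+c'x'y').
\]
Their difference is $cc'(xy'+x'y-xy-x'y')=-n(x-x')(y-y')$, so (i) holds.

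\emph{Step 2: semiprimeness.} Since $x^{q^2}-x\in S$, every algebra in the variety defined by $S$ is reduced. Hence $\mathbb{F}\langle X,t\rangle/\langle S\rangle$ is semiprime, as in Theorem~\ref{thmA}.

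\emph{Step 3: classification of tr-prime models.} Let $(\mathcal{B},t)$ be tr-prime with $S\subseteq\mathrm{Id}(\mathcal{B})$. By Lemma~\ref{Cap_3v}, as an ordinary algebra $\mathcal{B}$ is an $\mathbb{F}$-subalgebra of $D_2(\mathbb{E})$ or of $\mathbb{E}$. Since $\mathcal{B}$ is reduced, finite, and commutative (the Capelli identities of order $3$ together with the structure of $Q(\mathcal{B})$ give this), $\mathcal{B}$ is one of $\mathbb{F}$, $\mathbb{F}\oplus\mathbb{F}$, $\mathbb{E}$, $\mathbb{E}\oplus\mathbb{F}$ or $\mathbb{E}\oplus\mathbb{E}$, with a trace valued in $\mathbb{F}\cdot1$ by (iii).

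\begin{itemize}
\item If $\mathcal{B}=\mathbb{F}$, then $\mathcal{B}$ embeds into $(\mathbb{E},\mathrm{Tr}_c)$ as $\mathbb{F}\cdot 1$ via (ii), since $t(a)=as$ there. Here I use that $t$ is $\mathbb{F}$-linear with $t(1)=s$.
\item If $\mathcal{B}\supseteq\mathbb{E}\oplus\mathbb{F}$ or $\mathcal{B}=\mathbb{F}\oplus\mathbb{F}$, I intend to use (iv) to kill the decomposable case. Take $x=e$ an idempotent with $e\ne0,1$, so that $x^{q^2-1}=e$ and $t(e)(1-e)=0$. Since $t(e)\in\mathbb{F}$, this forces $t(e)=0$. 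Then $\mathcal{B}e$ is a tr-ideal: for $b\in\mathcal{B}$, $t(be)$ lies in $\mathbb{F}\cdot1$, and $t(be)(1-(be)^{q^2-1})=0$ gives $t(be)\in\mathbb{F}\cdot1\cap\mathcal{B}e=0$ when $1-(be)^{q^2-1}$ has nonzero component outside $e$. By symmetry $\mathcal{B}(1-e)$ is also a tr-ideal, and $\mathcal{B}e\cdot\mathcal{B}(1-e)=0$ contradicts tr-primality. This kills $\mathbb{F}\oplus\mathbb{F}$ and products.
\item The remaining case is $\mathcal{B}=\mathbb{E}$ with an $\mathbb{F}$-valued trace. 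By Lemma~\ref{tr_simple}, either $t=0$ (impossible, since $t(1)=s$ unless $s=0$; this subcase is handled below) or $t=\mathrm{Tr}_{d}$ for some $d\ne0$.
\end{itemize}

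\emph{Step 4: identifying $d$.} This is the main obstacle. From (ii), $\mathrm{Tr}(d)=s$. Applying (i) with $x=y$ running over $\mathbb{E}$ and computing as in Step~1 with $d$, we get
\[
(N(d)-n)(x-x^q)^2=0.
\]
Choosing $x\notin\mathbb{F}$ gives $N(d)=n$. Hence $d$ and $c$ have the same minimal polynomial, so $d\in\{c,c^q\}$ and $(\mathcal{B},t)\cong(\mathbb{E},\mathrm{Tr}_c)$. In the degenerate situation $s=0$ with $t=0$, identity (i) reduces to $n(x-x^q)^2=0$ with $n\ne0$, which fails on $\mathbb{E}$; the zero trace is also not tr-prime. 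In both cases Proposition~\ref{prop} finishes the proof. I expect the delicate points to be making the tr-ideal argument in Step~3 fully rigorous and confirming that Lemma~\ref{Cap_3v} leaves no further noncommutative cases.
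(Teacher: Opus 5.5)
Your proposal is correct and follows the paper's proof essentially step by step: Proposition~\ref{prop}, semiprimeness from $x^{q^2}-x$, Lemma~\ref{Cap_3v}, exclusion of the split case via (iv), and the trace/norm comparison giving $d\in\{c,c^q\}$. Your idempotent argument (that $\mathcal{B}e$ and $\mathcal{B}(1-e)$ are tr-ideals with zero product) is a more careful version of the paper's one-line remark that $D_2$ fails (iv), and it covers every decomposable subalgebra of $D_2(\mathbb{E})$; the only slip is the aside that the zero trace on $\mathbb{E}$ is not tr-prime, which is false since $(\mathbb{E},0)$ is tr-simple, but this is harmless because (i) already excludes $d=0$, as $N(0)=0\ne n$.
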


\begin{proof}
Let $S$ be the set of trace polynomials listed in the statement. We
shall apply Proposition~\ref{prop}.

We first verify that $
S\subseteq\operatorname{Id}(\mathbb{E},t_0)$. The identities $
t(1)-s,\ t(x)^q-t(x),\ x^{q^2}-x,\ t(x)(1-x^{q^2-1})$ are immediate. It is clear that all Capelli polynomials of
order $3$ vanish on $\mathbb{E}$, since
$\dim_{\mathbb{F}}\mathbb{E}=2$. Moreover, since $\mathrm{Tr}_c(x)=cx+c^qx^q$, we have
\begin{align*}
\mathrm{Tr}_c(x)\mathrm{Tr}_c(y)
={}&c^2xy+c^{q+1}xy^q+c^{q+1}x^qy+c^{2q}x^qy^q,
\end{align*}
whereas
\begin{align*}
s\,\mathrm{Tr}_c(xy)
&=(c+c^q)(cxy+c^qx^qy^q)\\
&=c^2xy+c^{q+1}xy+c^{q+1}x^qy^q+c^{2q}x^qy^q.
\end{align*}
Therefore,
\begin{align*}
\mathrm{Tr}_c(x)\mathrm{Tr}_c(y)-s\,\mathrm{Tr}_c(xy)
&=n\bigl(xy^q+x^qy-xy-x^qy^q\bigr)\\
&=-n(x-x^q)(y-y^q).
\end{align*}
Hence,
\[
t(x)t(y)-s\,t(xy)+n(x-x^q)(y-y^q)
\]
is a trace polynomial identity of $(\mathbb{E},\mathrm{Tr}_c)$. Thus, $S\subseteq\operatorname{Id}(\mathbb{E},\mathrm{Tr}_c)$.

% We also have $
% t_0(x)(1-x^{q^2-1})=0$
% for every $x\in\mathbb{E}$. Indeed, this is clear if $x=0$, while
% $x^{q^2-1}=1$ whenever $x\neq0$.

% We next observe that $(\mathbb{E},t_0)$ is centrally closed. Since
% $\mathbb{E}$ is a field, its only nonzero ideal is $\mathbb{E}$ itself,
% and every element of its centroid is given by multiplication by some
% $u\in\mathbb{E}$. If multiplication by $u$ commutes with $t_0$, then
% $t_0(ux)=u\,t_0(x)$ for every $x\in\mathbb{E}$. Since $t_0\neq0$, we may choose
% $x\in\mathbb{E}$ such that $t_0(x)\neq0$. It follows that
% $
% u\in\mathbb{F}$. Consequently, $
% C(\mathbb{E},t_0)=\mathbb{F}
% \ \text{and}\
% Q(\mathbb{E},t_0)=\mathbb{E}$.

The identity $x^{q^2}-x$ implies that the relatively free algebra $
\mathbb{F}\langle X,t\rangle/\langle S\rangle$ is semiprime. Now, let $(\mathcal{B},t_0)$ be a unital tr-prime algebra such that $S\subseteq\operatorname{Id}(\mathcal{B})$. By Lemma~\ref{Cap_3v}, the algebra $\mathcal{B}$ is isomorphic either
to an $\mathbb{F}$-subalgebra of $\mathbb{E}$ or to an
$\mathbb{F}$-subalgebra of $D_2(\mathbb{E})$.  First, note that we cannot have $\mathcal{B}\cong D_2$, since $D_2$ does not satisfy the identity $t(x)(1-x^{q^2-1})$.

It follows that $\mathcal{B}\cong\mathbb{F}\ \text{or}\ 
\mathcal{B}\cong\mathbb{E}$. Suppose first that $\mathcal{B}\cong\mathbb{F}$. Since $t_0$ is
$\mathbb{F}$-linear and $t_0(1)=s$, we have $
t(a)=sa$ for every $a\in\mathbb{F}$. On the other hand,
\[
t_0(a)
=\operatorname{Tr}_{\mathbb{E}/\mathbb{F}}(ca)
=a\operatorname{Tr}_{\mathbb{E}/\mathbb{F}}(c)
=sa.
\]
Hence, $(\mathcal{B},t)$ is isomorphic to the trace
subalgebra $\mathbb{F}\subseteq(\mathbb{E},\mathrm{Tr}_c)$.

Assume now that $\mathcal{B}\cong\mathbb{E}$. The identity $t(x)^q-t(x)$ forces $t_0(\mathcal{B})\subseteq\mathbb{F}$. Then there exists a unique
$d\in\mathbb{E}$ such that
\[
t_0(x)=\operatorname{Tr}_{\mathbb{E}/\mathbb{F}}(dx)
=dx+d^qx^q
\]
for every $x\in\mathbb{E}$. The identity $t(1)-s$ gives $d+d^q=s$. Repeating the computation performed above with $d$ in place of $c$,
we obtain
\[
t(x)t(y)-s\,t(xy)
=-N_{\mathbb{E}/\mathbb{F}}(d)(x-x^q)(y-y^q).
\]
Since $\mathcal{B}$ satisfies
\[
t(x)t(y)-s\,t(xy)+n(x-x^q)(y-y^q)=0,
\]
it follows that
\[
\bigl(n-N_{\mathbb{E}/\mathbb{F}}(d)\bigr)
(x-x^q)(y-y^q)=0
\]
for all $x,y\in\mathbb{E}$. Choosing $x,y\notin\mathbb{F}$, we obtain $N_{\mathbb{E}/\mathbb{F}}(d)=n$.

Thus, $c$ and $d$ have the same trace and the same norm over
$\mathbb{F}$. Hence, they are roots of the same polynomial
\[
X^2-sX+n\in\mathbb{F}[X].
\]
We have two possibilities, $d=c
\ \text{or}\ d=c^q$. In any case, one has $(\mathcal{B},t_0)\cong(\mathbb{E},\mathrm{Tr}_c)$. We have proved that every unital tr-prime algebra satisfying all the
polynomials in $S$ is isomorphic to a trace subalgebra of
$(\mathbb{E},t_0)$. Hence, all the hypotheses of
Proposition~\ref{prop} are satisfied, and we are done.
\end{proof}

\subsection{} Finally, we let $\mathcal{A}=(\mathbb{F}\cdot1+\mathbb{F}v,\lambda)$, where $\lambda\in\mathbb{F}$ and $v^2=0$. Recall that the trace map is defined by $t(1)=\lambda$ and $t(v)=1$.

\begin{lemma}\label{lemmaC}
The algebra $(\mathbb{F}\cdot1+\mathbb{F}v,\lambda)$ satisfies all the following polynomial identities:
\begin{enumerate}
\renewcommand{\labelenumi}{(\roman{enumi})}
\item $t(1)-\lambda$,
\item $t(x^q)-t(1)x^q$,
%\item $[x_1,x_2]$,
\item $(x^q-x)^2$,
\item $t(x)^q-t(x)$, and
\item all Capelli polynomials of order $3$ (i.e., the polynomial \eqref{alt} with $m=3$).
\end{enumerate}
In addition, if a tr-prime algebra $\mathcal{B}$ satisfies all the above polynomial identities, then it is a subalgebra of $(\mathbb{F}\cdot1+\mathbb{F}v,\lambda)$.
\end{lemma}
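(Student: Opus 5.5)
The proof has two parts: checking the five identities on $\mathcal{A}=(\mathbb{F}\cdot1+\mathbb{F}v,\lambda)$, and then classifying the tr-prime algebras that satisfy them.

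For the first part, write a general element as $x=a\cdot1+bv$ with $a,b\in\mathbb{F}$. Since $v^2=0$ and the binomial coefficients $\binom{q}{k}$ vanish modulo $\mathrm{char}\,\mathbb{F}$ for $0<k<q$, we get $x^q=a^q=a\in\mathbb{F}\cdot1$. Hence $x^q-x=-bv$, and its square is $b^2v^2=0$, which gives (iii). Identity (i) is the definition of the trace. For (ii), $t(x^q)=t(a\cdot1)=\lambda a=t(1)x^q$. For (iv), the image of $t$ lies in $\mathbb{F}\cdot1$, and every element $c\in\mathbb{F}$ satisfies $c^q=c$. For (v), a multilinear polynomial alternating in three variables vanishes on a $2$-dimensional algebra: expand each of the three substituted elements in the basis $\{1,v\}$ and use multilinearity, so every term has two arguments from the same basis vector and is killed by the alternation.

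For the converse, let $\mathcal{B}$ be a tr-prime algebra satisfying (i)--(v), and split into two cases. If $\mathcal{B}$ does not satisfy $x^q-x$, then Lemma~\ref{Cap_3_weak} applies, since its hypotheses $(x^q-x)^2$, $t(x)^q-t(x)$ and Capelli of order $3$ are exactly (iii), (iv), (v). It gives $\mathcal{B}\cong(\mathbb{F}\cdot1+\mathbb{F}v,\mu)$ for some $\mu\in\mathbb{F}$, and identity (i) forces $\mu=t(1)=\lambda$, so $\mathcal{B}\cong\mathcal{A}$. If $\mathcal{B}$ does satisfy $x^q-x$, then Lemma~\ref{Cap_3} gives that $\mathcal{B}$ is isomorphic to an $\mathbb{F}$-subalgebra of $D_2(\mathbb{F})$, so $\mathcal{B}\cong\mathbb{F}$ or $\mathcal{B}\cong D_2$ with some trace. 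Here identity (ii) becomes $t(x)=t(1)x$, because $x^q=x$.
\begin{itemize}
\item If $\mathcal{B}=\mathbb{F}$, then (i) gives $t(a)=\lambda a$. This is exactly the trace subalgebra $\mathbb{F}\cdot1\subseteq\mathcal{A}$, since $t(a\cdot1)=\lambda a$ there.
\item If $\mathcal{B}=D_2$ with trace $t_{(\mu_1,\mu_2)}$, then $t(x)=t(1)x$ means $(\mu_1a+\mu_2b)(1,1)=(\mu_1+\mu_2)(a,b)$ for all $a,b$. Taking $(a,b)=(1,0)$ gives $\mu_1=\mu_1+\mu_2$ and $\mu_1+\mu_2=0$ in the two coordinates, so $\mu_1=\mu_2=0$. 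This contradicts tr-primality by Lemma~\ref{rem}.
\end{itemize}
Thus every tr-prime $\mathcal{B}$ satisfying the identities is a tr-subalgebra of $\mathcal{A}$.

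The step needing most care is the second case, because trace algebras on $D_2$ can in principle carry non-scalar traces $\mathcal{A}(\mu_1,\mu_2,\nu_1,\nu_2)$. By Lemma~\ref{rem}, however, the tr-prime ones are exactly the $D_2$-type traces $t_{(\mu_1,\mu_2)}$, so the computation above covers all of them. We must also check that Lemma~\ref{Cap_3} really yields $\mathcal{B}\subseteq D_2(\mathbb{F})$ as trace algebras, not just as rings. The trace on $\mathcal{B}$ is whatever it is, and the argument above only uses the ring structure of $\mathcal{B}$ together with the identities, so this is not an issue.
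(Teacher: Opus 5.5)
Your proof is correct and follows the paper's argument. The paper also reduces via Lemma~\ref{Cap_3_weak}, and your explicit use of Lemma~\ref{Cap_3} for the case where $x^q-x$ holds just spells out what the paper leaves implicit; it then fixes $\lambda$ with identity (i) and excludes $(D_2,t_{(\mu_1,\mu_2)})$ with identity (ii), as you do. One trivial slip: at $(a,b)=(1,0)$ the second coordinate gives $\mu_1=0$ rather than $\mu_1+\mu_2=0$, which does not affect the conclusion $\mu_1=\mu_2=0$.
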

\begin{proof}
Let $S$ be the set of the polynomials of the statement. It is not hard to prove that $S\subseteq\mathrm{Id}(\mathbb{F}\cdot1+\mathbb{F}v,\lambda)$.

Let $\mathcal{B}$ be a tr-prime algebra with $S\subseteq\mathrm{Id}(\mathcal{B})$. Then, Lemma~\ref{Cap_3_weak} implies that $\mathcal{B}\subseteq D_2$ or $\mathcal{B}\cong(\mathbb{F}\cdot1+\mathbb{F}v,\lambda')$. The second case is done, since the identity $t(1)-\lambda$ forces $\lambda'=\lambda$. So, assume that $\mathcal{B}\subseteq D_2$. If $\dim_\mathbb{F}\mathcal{B}=1$, then there is nothing to do. If $\mathcal{B}\cong(D_2,t_{(\alpha,\beta)})$, then we obtain a contradiction, since $D_2$ does not satisfy $t(x^q)-t(1)x^q$.
\end{proof}

Now, to apply Proposition~\ref{prop}, we need to find a list of polynomials where the respective relatively free algebra is tr-semiprime.

\begin{lemma}\label{lemmaD}
Let $S$ be the set of all polynomials listed in Lemma~\ref{lemmaC}, together with $t(x-x^q)^{q-1}(x-x^q)-(x-x^q)$. Then $\mathbb{F}\langle X,t\rangle/\langle S\rangle$ is tr-semiprime.
\end{lemma}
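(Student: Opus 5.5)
We need to show that the relatively free algebra $\mathcal{R}=\mathbb{F}\langle X,t\rangle/\langle S\rangle$ has no nonzero tr-ideal $I$ with $I^2=0$. The plan is to reduce this to the nilpotent elements of the form $w=x-x^q$. In $\mathcal{R}$ these satisfy $w^2=0$ by identity (iii), and the extra identity $t(w)^{q-1}w=w$ shows that no nonzero such $w$ can have zero trace.

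\textbf{Step 1 (nilpotents).} By the identity $(x^q-x)^2$, every element $a\in\mathcal{R}$ satisfies $(a^q-a)^2=0$. The map $a\mapsto a^q$ is additive modulo the ideal generated by commutators, and this ideal should vanish here or be nil: Capelli of order $3$ together with (iii) should force commutativity modulo nilpotents. First I would show that the set $N$ of nilpotent elements of $\mathcal{R}$ is an ideal with $N^2=0$. On the quotient by $N$, the identity $x^q=x$ holds, so $\mathcal{R}/N$ is a subdirect product of copies of $\mathbb{F}$ (Jacobson). Next I would check, using identity (ii) $t(x^q)=t(1)x^q$ and the structure of the generic model, that $N$ is spanned over $\mathcal{R}$ by the elements $u-u^q$. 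Concretely, every element of $\mathcal{R}$ can be written as $a=a^q+(a-a^q)$ with $a^q$ lying in a reduced ($x^q=x$) part. So any nilpotent $a$ has $a^q$ nilpotent and reduced, hence $a^q=0$, hence $a=a-a^q$.

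\textbf{Step 2 (trace on $N$).} Let $I$ be a tr-ideal with $I^2=0$. Then $I\subseteq N$, since square-zero elements are nilpotent. Take $0\ne a\in I$; by Step 1, $a=a-a^q$. Substituting $x=a$ into the extra identity gives $t(a)^{q-1}a=a$, so $t(a)\ne0$. Since $I$ is a tr-ideal, $t(a)\in I$, and therefore $t(a)^2\in I^2=0$. On the other hand, identity (iv) gives $t(a)^q=t(a)$, so $t(a)=t(a)^q=t(a)^2t(a)^{q-2}=0$ when $q\ge2$ (for $q=2$ directly $t(a)=t(a)^2=0$). This contradicts $t(a)\ne0$. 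Hence $I=0$, and $\mathcal{R}$ is tr-semiprime.

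\textbf{Main obstacle.} The delicate point is Step 1: showing that every nilpotent element of $\mathcal{R}$ is of the form $a-a^q$, or at least that for every nonzero element $b$ of a square-zero tr-ideal some multiple of $b$ of that form is nonzero. I would first try to avoid it by working directly: for $b\in I$, set $w=b-b^q$. Then $b^q\in I^{q}=0$ for $q\ge2$, so $w=b$ automatically. This makes Step 1 unnecessary, because $b^q=0$ already follows from $I^2=0$. With this observation the argument reduces to Step 2, applied to $x=b$.
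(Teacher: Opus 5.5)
Your final argument is correct and is essentially the paper's proof. It runs: $b^q=0$ from $I^2=0$; then $t(b)\in I$ together with $t(x)^q-t(x)$ forces $t(b)=0$; and the extra identity at $x=b$ gives $b=t(b)^{q-1}b=0$. Present it on its own and delete the speculative Step 1, whose claims about $N$ and about commutativity are unproven and not needed.

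The only difference is that the paper treats an arbitrary nilpotent tr-ideal, so it cannot assume $f^q=0$. It instead shows $t(f^q)=0$, deduces $f=f^q$, and iterates to $f=f^{q^m}=0$. For the square-zero definition of tr-semiprime used in the paper, your shortcut suffices.
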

\begin{proof}
Let $R=\mathbb{F}\langle X,t\rangle/\langle S\rangle$ and let $I$ be a nilpotent tr-ideal of $R$. We prove that $I=0$. For, take $f\in I$. Since $I$ is nilpotent, both $f$ and $t(f)$ are nilpotent. On the other hand, the identity $t(x)^q-t(x)=0$ gives us $t(f)^q=t(f)$. Now, since $t(f)$ is nilpotent, it follows that $t(f)=0$. Analogously, since
$f^q\in I$, $t(f^q)\in I$ is nilpotent, and the same identity gives $t(f^q)=0$. Hence we get $
t(f-f^q)=0$. Evaluating the identity
\[
t(x-x^q)^{q-1}(x-x^q)-(x-x^q)=0
\]
at $x=f$, we obtain $
-(f-f^q)=0$, i.e., $f=f^q$. Iterating this equality yields
$f=f^{q^m}$ for every $m\geq 1$. Since $f$ is nilpotent, we may choose $m$ such that
$f^{q^m}=0$. Hence $f=0$. Therefore every element of $I$ is zero, so $I=0$ and the proof follows.
\end{proof}

% \begin{example}
% Let $\lambda\in\mathbb{F}\setminus\{0\}$ and let $S$ be the polynomials listed in Lemma~\ref{lemmaC}. Define $\mathcal{A}=\mathbb{F}[\xi]/\langle\xi^2\rangle$ endowed with the trace map $t_\lambda(x)=\lambda x$. Then, it is not hard to see that $(\mathcal{A},t_\lambda)$ satisfies all of the polynomials in $S$; however, it does not satisfy $t(x)^q-t(x)$, nor $t(x-x^q)^{q-1}(x-x^q)-(x-x^q)$. Therefore, $t(x)^q-t(x)$ and $t(x-x^q)^{q-1}(x-x^q)-(x-x^q)$ are not consequences of $S$.
% \end{example}

As a consequence, we combine Proposition~\ref{prop}, together with Lemmas~\ref{lemmaC} and \ref{lemmaD}, to obtain a characterization of $\mathrm{Id}(\mathbb{F}\cdot1+\mathbb{F}v,\lambda)$.
\begin{thm}\label{thmC}
Let $\mathbb{F}$ be a finite field with $q$ elements, $\lambda\in\mathbb{F}$ and consider the algebra $(\mathbb{F}\cdot1+\mathbb{F}v,\lambda)$. Then, $\mathrm{Id}(\mathbb{F}\cdot1+\mathbb{F}v,\lambda)$ follows from:
\begin{enumerate}
\renewcommand{\labelenumi}{(\roman{enumi})}
\item $t(1)-\lambda$,
\item $t(x^q)-t(1)x^q$,
%\item $[x_1,x_2]$,
\item $(x^q-x)^2$,
\item $t(x)^q-t(x)$,
\item $t(x-x^q)^{q-1}(x-x^q)-(x-x^q)$, and
\item all Capelli polynomials of order $3$ (i.e., the polynomial \eqref{alt} with $m=3$).\qed
\end{enumerate}
\end{thm}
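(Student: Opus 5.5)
Let $S$ be the set of polynomials (i)--(vi). We apply Proposition~\ref{prop} to $\mathcal{A}=(\mathbb{F}\cdot1+\mathbb{F}v,\lambda)$. First we check $S\subseteq\mathrm{Id}(\mathcal{A})$. For (i)--(iv) and (vi) this is Lemma~\ref{lemmaC}. For (v), write $x=a+bv$. Then $x^q=a^q+qa^{q-1}bv=a$ in characteristic $p\mid q$, since $a^q=a$. Hence $x-x^q=bv$. This gives $t(x-x^q)=b$ and
\[
t(x-x^q)^{q-1}(x-x^q)=b^{q-1}bv=b^qv=bv.
\]
So (v) vanishes on $\mathcal{A}$.

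Next we verify condition (i) of Proposition~\ref{prop}, i.e.\ semiprimeness of $R=\mathbb{F}\langle X,t\rangle/\langle S\rangle$. Lemma~\ref{lemmaD} gives that $R$ is tr-semiprime. In the present context of $\Omega$-algebras the ideals are the $\Omega$-ideals, that is, the tr-ideals, so this is exactly the semiprimeness required. If one insists on ordinary semiprimeness the argument fails, since $\mathcal{A}$ itself is not semiprime. Proposition~\ref{prop} must therefore be read with ``prime/semiprime'' in the $\Omega$-sense, i.e.\ tr-prime and tr-semiprime. The rest of that proposition then goes through verbatim: a tr-semiprime relatively free algebra is a subdirect product of tr-prime algebras satisfying $S$.

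For condition (ii), let $\mathcal{B}$ be a tr-prime algebra with $S\subseteq\mathrm{Id}(\mathcal{B})$. By the second part of Lemma~\ref{lemmaC}, which rests on Lemmas~\ref{Cap_3} and~\ref{Cap_3_weak}, $\mathcal{B}$ is a subalgebra of $(\mathbb{F}\cdot1+\mathbb{F}v,\lambda)$. This is a section with trivial surjection.

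Two points need care.
\begin{itemize}
\item We must check that $\mathcal{A}$ is centrally closed, as Proposition~\ref{prop} hypothesizes. I would argue $C(\mathcal{A})=\mathbb{F}$: the trace has image $\mathbb{F}$ and is not $\mathcal{A}$-linear, so any centroid element commuting with $t$ is a scalar. Alternatively, only the direction using Lemma~\ref{section} needs this, and we use the converse direction, which only requires $\mathcal{A}$ finite.
\item In Lemma~\ref{lemmaC}, the case $\dim\mathcal{B}=1$ inside $D_2$ needs a check: $\mathcal{B}=\mathbb{F}$ with $t(1)=\lambda$ is the subalgebra $\mathbb{F}\cdot1$ of $\mathcal{A}$, which is fine.
\end{itemize}

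Combining, every tr-prime algebra in the variety defined by $S$ satisfies $\mathrm{Id}(\mathcal{A})$, so $\mathrm{Id}(\mathcal{A})\subseteq\langle S\rangle$, and equality follows.

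The main obstacle I expect is the semiprimeness step: reconciling tr-semiprimeness with the notion used in Proposition~\ref{prop}, which is handled by the $\Omega$-algebra reading above.
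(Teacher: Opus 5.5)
Your proposal is correct and follows the paper's route: Proposition~\ref{prop}, read in the tr-sense, combined with Lemma~\ref{lemmaD} for tr-semiprimeness of the relatively free algebra and Lemma~\ref{lemmaC} to show that every tr-prime algebra satisfying the identities is a trace subalgebra of $(\mathbb{F}\cdot1+\mathbb{F}v,\lambda)$. You also make explicit details the paper leaves implicit: the direct check that (v) vanishes on $(\mathbb{F}\cdot1+\mathbb{F}v,\lambda)$, and the observation that only the direction ``(i) and (ii) imply equality'' of Proposition~\ref{prop} is used, so central closedness plays no role.
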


\section{Further discussion\label{sec}}
In this section, we discuss the classical approach to obtaining an explicit finite basis for a given finite algebra, as opposed to applying Proposition~\ref{prop}.

An $\Omega$-algebra is said to be \emph{subdirectly irreducible} if the intersection of all its nontrivial ideals is nontrivial. Equivalently, a subdirectly irreducible algebra cannot be represented as a nontrivial subdirect product of algebras. It is well known that every $\Omega$-algebra is a subdirect product of subdirectly irreducible $\Omega$-algebras. In particular, every algebra is PI-equivalent to a family of subdirectly irreducible algebras. Moreover, it is straightforward to see that every variety is generated by its finitely generated algebras. Combining these facts and specializing to our setting, we obtain:
\begin{lemma}
Every variety of trace algebras is generated by its finitely generated subdirectly irreducible trace algebras.\qed
\end{lemma}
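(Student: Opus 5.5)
The plan is to combine the two ingredients behind the statement. The first is that every algebra is a subdirect product of subdirectly irreducible algebras. The second is that a variety is generated by its finitely generated members. Let $\mathfrak{V}$ be a variety of trace algebras, and let $\mathfrak{V}_0$ be the class of its finitely generated subdirectly irreducible members. Since $\mathfrak{V}_0\subseteq\mathfrak{V}$, we have $\mathrm{Id}(\mathfrak{V})\subseteq\mathrm{Id}(\mathfrak{V}_0)$. The work is in proving the reverse inclusion.

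First, I note that $\mathfrak{V}$ is generated by its relatively free algebras $\mathbb{F}\langle x_1,\ldots,x_n,t\rangle/(\mathrm{Id}(\mathfrak{V})\cap\mathbb{F}\langle x_1,\ldots,x_n,t\rangle)$ on finitely many generators. Each of these is finitely generated and lies in $\mathfrak{V}$. Any polynomial $f\notin\mathrm{Id}(\mathfrak{V})$ involves only finitely many variables, say $x_1,\ldots,x_n$, so $f$ is not an identity of the $n$-generated relatively free algebra $\mathcal{F}_n$. Hence $\mathrm{Id}(\mathfrak{V})$ is the intersection of the $\mathrm{Id}(\mathcal{F}_n)$.

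Second, I apply Birkhoff's subdirect representation theorem to $\mathcal{F}_n$, viewed as an $\Omega$-algebra with $\Omega=\{1,t,\cdot\}$. It gives an embedding $\mathcal{F}_n\hookrightarrow\prod_i\mathcal{F}_n/I_i$, where the $I_i$ are tr-ideals, which are exactly the congruences of the $\Omega$-algebra. Each quotient $\mathcal{F}_n/I_i$ is subdirectly irreducible. Each is also finitely generated, since it is a homomorphic image of $\mathcal{F}_n$, and each lies in $\mathfrak{V}$, since varieties are closed under quotients. A subdirect product satisfies exactly the common identities of its factors, so $\mathrm{Id}(\mathcal{F}_n)=\bigcap_i\mathrm{Id}(\mathcal{F}_n/I_i)\supseteq\mathrm{Id}(\mathfrak{V}_0)$. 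Intersecting over $n$ gives $\mathrm{Id}(\mathfrak{V})\supseteq\mathrm{Id}(\mathfrak{V}_0)$, which is the claimed equality.

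The only point needing care is Birkhoff's theorem itself. Its proof goes as follows. For each pair $a\ne b$ in $\mathcal{F}_n$, Zorn's lemma gives a tr-ideal $I_{a,b}$ that is maximal with respect to $a-b\notin I_{a,b}$. The quotient $\mathcal{F}_n/I_{a,b}$ is subdirectly irreducible, because every nonzero tr-ideal of it contains the image of $a-b$. The intersection of all the $I_{a,b}$ is zero, so $\mathcal{F}_n$ embeds into their product. Zorn's lemma applies because a union of a chain of tr-ideals is again a tr-ideal. This step is standard for any variety of $\Omega$-algebras, so I do not expect a real obstacle here, only the bookkeeping that ideals stable under $t$ coincide with congruences.
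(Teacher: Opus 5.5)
Your proof is correct and follows the paper's route: the paper combines Birkhoff's subdirect representation theorem with the fact that a variety is generated by its finitely generated members, which you realize through the finitely generated relatively free algebras. You also fill in the details the paper leaves implicit: the subdirectly irreducible factors of a finitely generated algebra are finitely generated quotients lying in the variety, and congruences of trace algebras are exactly tr-ideals.
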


As a consequence, we obtain the following well-known result, specialized to our setting:
\begin{lemma}\label{alternative_id}
Let $\mathcal{A}$ be a trace algebra and $S\subseteq\mathrm{Id}(\mathcal{A})$. If every finitely generated subdirectly irreducible trace algebra in $\mathrm{var}(S)$ is a section of $\mathcal{A}$, then $\mathrm{Id}(\mathcal{A})=\langle S\rangle$.\qed
\end{lemma}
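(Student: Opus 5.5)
This lemma follows from the two preceding general facts once they are combined with the basic closure properties of varieties. Write $\mathcal{V}=\mathrm{var}(S)$, the variety of trace algebras satisfying every polynomial in $S$. The inclusion $\langle S\rangle\subseteq\mathrm{Id}(\mathcal{A})$ is immediate, because $S\subseteq\mathrm{Id}(\mathcal{A})$ and $\mathrm{Id}(\mathcal{A})$ is a T-ideal, hence closed under the operations that generate $\langle S\rangle$ from $S$. It remains to show $\mathrm{Id}(\mathcal{A})\subseteq\langle S\rangle$. Equivalently, every $f\in\mathrm{Id}(\mathcal{A})$ vanishes on every algebra of $\mathcal{V}$, since $\langle S\rangle$ is exactly the set of trace polynomials that are identities of all of $\mathcal{V}$.

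First I record that identities pass to sections. If $\mathcal{S}\subseteq\mathcal{A}$ is a trace subalgebra and $\pi:\mathcal{S}\to\mathcal{B}$ is a surjective homomorphism of trace algebras, then $\mathrm{Id}(\mathcal{A})\subseteq\mathrm{Id}(\mathcal{S})\subseteq\mathrm{Id}(\mathcal{B})$. The first inclusion holds because evaluations in $\mathcal{S}$ are evaluations in $\mathcal{A}$. For the second, any tuple in $\mathcal{B}$ lifts through $\pi$ to a tuple in $\mathcal{S}$, and $\pi$ commutes with the products, the unit and $t$, so it commutes with evaluation of trace polynomials. Consequently, by hypothesis, every finitely generated subdirectly irreducible trace algebra $\mathcal{B}\in\mathcal{V}$ satisfies $\mathrm{Id}(\mathcal{A})$.

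Next, by the preceding lemma, $\mathcal{V}$ is generated by its finitely generated subdirectly irreducible members. Concretely, the relatively free algebra $\mathbb{F}\langle X,t\rangle/\langle S\rangle$ lies in the class obtained from these members by taking subdirect products and directed unions (or, equivalently, homomorphic images of subalgebras of products). Each of these operations preserves the validity of a fixed identity. Hence every $f\in\mathrm{Id}(\mathcal{A})$ vanishes in $\mathbb{F}\langle X,t\rangle/\langle S\rangle$ when evaluated on its generators, which gives $f\in\langle S\rangle$.

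The only point needing care is the middle step. The preceding lemma must be applied in its strong form: every algebra of $\mathcal{V}$ lies in the variety generated by the finitely generated subdirectly irreducible algebras of $\mathcal{V}$. In practice one checks this directly on a finitely generated subalgebra of the relatively free algebra that contains the variables occurring in $f$. That subalgebra is a subdirect product of subdirectly irreducible quotients, each of which is again finitely generated and lies in $\mathcal{V}$. Each quotient satisfies $f$ by the first step, and therefore so does the subdirect product, which completes the argument.
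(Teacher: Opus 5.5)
Your proof is correct and is the intended argument. The paper leaves this lemma without proof, as a direct consequence of the preceding lemma (the variety is generated by its finitely generated subdirectly irreducible members) together with the fact that sections of $\mathcal{A}$ satisfy $\mathrm{Id}(\mathcal{A})$. Your explicit check, via Birkhoff's subdirect decomposition of the finitely generated subalgebra of $\mathbb{F}\langle X,t\rangle/\langle S\rangle$, just unpacks that preceding lemma in the case you need.
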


Now, let $\mathcal{A}$ be a finite subdirectly irreducible trace algebra, and let $I$ be its unique minimal ideal, that is, the intersection of all nontrivial ideals. Then either $I^2=0$ or $I^2\neq0$. In the latter case, $\mathcal{A}$ is tr-prime, whereas in the former case, $\mathcal{A}$ is not semiprime as an ordinary algebra.

Now, if an algebra satisfies an identity of the kind $(x^a-x)^b$, with $a\in\mathbb{N}$ and $b\ge0$, then it is an algebraic algebra. By Kaplansky Theorem, a finitely generated algebraic algebra satisfying a polynomial identity is finite-dimensional. Thus, in our specific setting, it would suffice to consider finite subdirectly irreducible trace algebras. However, we will not need this fact here.

Now, let $S$ be a set of trace polynomial identities defined in any of Theorems~\ref{thm_alphabeta}, \ref{thm_lambda}, \ref{thmA}, or~\ref{thmB}. The identity of the form $x^{q'}-x$ automatically implies that every algebra in the variety $\mathrm{var}(S)$ is semiprime as an ordinary algebra. Hence, there is no difference between applying Lemma~\ref{alternative_id} and Proposition~\ref{prop} to characterize the respective T-ideal of trace polynomial identities, since in either case, the problem reduces to scrutinizing the tr-prime algebras satisfying $S$.

Now, let $S$ be the set of trace polynomial identities defined in Theorem~\ref{thmC}. As in the proof of Lemma~\ref{lemmaD}, one proves that any algebra satisfying all the identities of $S$ is tr-semiprime. By Lemma~\ref{lemmaC}, every tr-prime algebra satisfying $S$ is a trace subalgebra of $(\mathbb{F}\cdot1+\mathbb{F}v,\lambda)$. Hence, once again, there is no significant difference between applying Lemma~\ref{alternative_id} and Proposition~\ref{prop}.

\end{document}